\newtheorem*{corollary*}{Corollary}
\newtheorem*{theorem*}{Theorem}
\newtheorem{theorem}{Theorem}[section]
\newtheorem{corollary}[theorem]{Corollary}
\newtheorem{proposition}[theorem]{Proposition}
\newtheorem{lemma}[theorem]{Lemma}
\newtheorem{hypothesis}[theorem]{Hypothesis}
\theoremstyle{definition}
\theoremstyle{remark}
\newtheorem*{remark}{Remark}
\newtheorem*{remarks}{Remarks}
\numberwithin{equation}{section}
\newcommand{\declarecommand}[1]{\providecommand{#1}{}\renewcommand{#1}}
\newcommand\numberthis{\addtocounter{equation}{1}\tag{\theequation}}
\declarecommand{\R}{\mathbb{R}}
\declarecommand{\Q}{\mathbb{Q}}
\declarecommand{\Z}{\mathbb{Z}}
\declarecommand{\N}{\mathbb{N}}
\declarecommand{\C}{\mathbb{C}}
\declarecommand{\emptyset}{\varnothing}
\declarecommand{\Re}{\mathrm{Re}}
\declarecommand{\Im}{\mathrm{Im}}
\declarecommand{\sL}{\mathscr{L}}
\declarecommand{\epsilon}{\varepsilon}
\declarecommand{\ds}{\displaystyle}
\declarecommand{\edit}[1]{{\color{red}#1}}
\declarecommand{\emph}[1]{\textit{#1}}
\declarecommand{\edit}[1]{{#1}}
\title{Explicit Deuring--Heilbronn phenomenon for Dirichlet $L$-functions}
\author{K\"{u}bra Benl\.{i}} 
\address{
K\"{u}bra Benl\.{i}\\
Department of Mathematics \\
Bo\u{g}az\.{i}\c{c}\.{i} University   \\ 
Bebek, \.{I}stanbul 34342 \\ 
Turkey
(\href{mailto:kubra.benli@boun.edu.tr}{\tt kubra.benli@pt.bogazici.edu.tr})
}
\author{Shivani Goel}
\address{
Shivani Goel\\
Chennai Mathematical Institute\\
H-1 SIPCOT IT Park, Siruseri, Kelambakkam\\ Tamil Nadu, 603103\\
India
(\href{mailto:shivanig@cmi.ac.in}{\tt shivanig@cmi.ac.in})
}
\author{Henry Twiss}
\address{
Henry Twiss\\
Department of Mathematics \\
Brown University \\
167 Thayer St, Room 110 \\
Providence, RI 02906 \\
USA
(\href{mailto:henry\_twiss@brown.edu}{\tt henry\_twiss@brown.edu})
}
\author{Asif Zaman}
\address{
Asif Zaman \\
Department of Mathematics \\
University of Toronto   \\ 
40 St. George Street, Room 6290 \\
Toronto, ON M5S 2E4 \\
CANADA 
(\href{mailto:asif.zaman@utoronto.ca}{\tt asif.zaman@utoronto.ca})
}
\keywords{Deuring--Heilbronn phenomenon,  Landau--Siegel zero,  Selberg’s sieve, zero repulsion}
\subjclass[2020]{11M06, 11M20, 11N36}
\begin{document}

\begin{abstract}
Assuming the existence of a Landau--Siegel zero, we establish an explicit Deuring--Heilbronn zero repulsion phenomenon for Dirichlet $L$-functions modulo $q$. Our estimate is uniform in the entire critical strip, and improves over the previous best known explicit estimate \,due to Thorner and Zaman \edit{\cite{thorner_explicit_2024}}. 

\end{abstract}

\maketitle



\section{Introduction}
For any integer $q \geq 1$, define the function
\begin{equation}
\label{eqn:sL}
\sL_q(s) := \prod_{\chi\pmod{q}} L(s,\chi),
\end{equation}
where the product runs over Dirichlet characters $\chi$ modulo $q$. A result of McCurley \cite{mccurley_explicit_1984} (cf. Kadiri \cite{kadiri_explicit_2018} for improvements) states: for $q \geq 3$, the function $\sL_q(s)$ has at most a single zero  in the region
\begin{equation} \label{eqn:ZFR-McCurley}
\Re(s) > 1- \frac{1}{10 \log \max\{ q, q|\Im(s)|, 10\}}
\end{equation}
and, if such a zero exists, then it is a simple real zero $\beta_1 = \beta_1(q)$ associated to a Dirichlet $L$-function $L(s,\chi_{1})$ where $\chi_{1}$ is a real quadratic character modulo $q$. 
This zero is famously referred to as a \emph{Landau--Siegel zero} (or \emph{Siegel zero} or \emph{exceptional zero}) and, according to conjecture, it is not expected to exist. Assuming $\beta_1$ exists, our main result implies the following explicit form of Deuring--Heilbronn zero repulsion phenomenon.  

\begin{corollary}	\label{cor:explicit} Let $T \geq 4$ be real and $q > 400,000$ be an integer. Assume $\sL_q(s)$ given by \eqref{eqn:sL} has a real zero at $s=\beta_1 > 1 - \frac{1}{10 \log q}$. If $\rho = \beta+i\gamma$ is another zero of $\sL_q(s)$ satisfying $\beta > \tfrac{1}{2}$ and $|\gamma| \leq T$, then
\begin{equation} \label{eqn:main}
\beta  < 1 - \frac{\log\Big(\dfrac{c_4}{(1-\beta_1) (c_1 \log q + c_2 \log T + c_3)} \Big)}{c_1 \log q + c_2 \log T + c_3},
\end{equation}
where 
\[
c_1 = 10, \quad c_2 = 1, \quad c_3 = 107, \quad c_4 = \frac{1}{16}.
\]
\end{corollary}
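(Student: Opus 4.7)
The plan is to deduce \eqref{eqn:main} from the (forthcoming) main theorem of the paper by specialization to the regime $\beta > \tfrac{1}{2}$, $|\gamma|\leq T$. Since the main theorem is advertised as being uniform on the whole critical strip, I will first sketch how I would prove it and then how to pass to the corollary.

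For the main theorem, I would use the Selberg-sieve-weighted positivity argument. Fixing $\sigma > 1$ slightly and setting $\lambda(n) = \sum_{d\mid n,\,d\leq D}\lambda_d$ for a Selberg mollifier of length $D$, consider the non-negative Dirichlet series
\begin{equation*}
\sum_{n\geq 1}\Lambda(n)\lambda(n)^2 n^{-\sigma}\bigl(1+\chi_1(n)\bigr)\bigl(1+\Re(\chi(n)n^{-i\gamma})\bigr)\geq 0,
\end{equation*}
where $\chi_1$ is the real character whose $L$-function vanishes at $\beta_1$ and $\chi$ is the character associated to $\rho$. Expanding the brackets yields four character sums, each of which I would evaluate by a truncated explicit formula. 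The positive mass comes from the pole of $\zeta(s)$ at $s=1$, while the dominant negative mass comes from the Siegel zero $\beta_1$ and from $\rho$ together with its reflection. The Selberg weights damp contributions of high zeros by a factor $(\log D)^{-k}$ and tame prime-power error terms. Choosing $\log D\asymp\log(qT)$ and optimizing $\sigma$ produces an inequality roughly of the shape
\begin{equation*}
(1-\beta_1)\,L\,\exp\bigl((1-\beta)L\bigr)\geq c_4, \qquad L := c_1\log q + c_2\log T + c_3,
\end{equation*}
which, upon taking logarithms, rearranges into \eqref{eqn:main}. The constants $(c_1,c_2,c_3,c_4)=(10,1,107,1/16)$ would emerge from a careful numerical optimization of $D$, $\sigma$, and the Selberg polynomial degree $k$; that $c_1=10$ matches McCurley's zero-free region \eqref{eqn:ZFR-McCurley} is not a coincidence, reflecting that the method cannot beat that constant near $s=1$ without new ideas.

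The main obstacle is sharpening all numerical inputs relative to Thorner--Zaman while preserving uniformity throughout the critical strip. This requires (a) an optimal choice of the sieve polynomial that balances the Selberg-weight damping against the explicit-formula error, (b) careful bookkeeping in the truncated explicit formula for all four character sums, especially for secondary zeros near the real axis where one leans on \eqref{eqn:ZFR-McCurley} to exclude a neighborhood of $s=1$, and (c) combining all arithmetic error estimates into the single constant $c_3$, which is likely why the hypothesis $q>400{,}000$ appears (it forces the residual $O(1/\log q)$ terms to be absorbed cleanly). Once the master inequality is in hand, specializing to $\beta>\tfrac{1}{2}$ and $|\gamma|\leq T$—which allows one to drop or absorb several subdominant zero contributions—yields \eqref{eqn:main} directly.
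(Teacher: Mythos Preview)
Your proposal has a genuine gap. The paper's proof of this corollary is a two-line numerical verification: apply Theorem~1.3 (the flexible main theorem) with the explicit convexity input $\theta=\tfrac14$, $A=3$ from Proposition~2.4 and the explicit Siegel input $\epsilon=\tfrac12$, $B=100$ from Bordignon's Theorem~2.6, obtain $M=Kq^{8\theta+2\epsilon}T^{4\theta}$ with $K$ as in \eqref{eqn:M}, and check via elementary calculus that $\log M\leq 10\log q+\log T+107$ for $q>400{,}000$. The corollary's constants $(c_1,c_2,c_3,c_4)$ are thus outputs of these \emph{external} inputs fed into Theorem~1.3, and your proposal does not mention any of them.

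More seriously, your sketch of the main theorem itself follows a different route from the paper, and one that is unlikely to produce these constants. You work with $\Lambda(n)$-weighted sums, the non-negativity trick $(1+\chi_1)(1+\Re\chi n^{-i\gamma})$, and a truncated explicit formula over zeros. The paper instead follows Graham: it uses the divisor-like weight $a(n)=\sum_{d\mid n}\chi_1(d)$, mollifies by Selberg weights, and estimates the resulting sum by Mellin inversion and contour shifts, with the key analytic inputs being pointwise bounds for $L(\tfrac12+it,\chi)$ and an explicit upper bound for $\beta_1$ (Hypotheses~A and~B). The explicit formula does not appear at all. Your approach is closer in spirit to the power-sum method that the paper explicitly identifies as the source of the weaker constants in Thorner--Zaman; there is no reason to expect it to yield $c_1=10$, $c_2=1$.

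Finally, your explanation that $c_1=10$ ``matches McCurley's zero-free region \ldots\ reflecting that the method cannot beat that constant'' is incorrect. The paper notes that as $qT\to\infty$ the admissible $c_1$ tends to $8\theta+2\epsilon=3$ with these inputs; the value $10$ is chosen only so that $c_3=107$ suffices uniformly for $q>400{,}000$, absorbing the $e^{8(\log q)^{3/4}}(\log q)^{28}$ factor in $K$. The coincidence with McCurley's constant is accidental.
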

\begin{remarks}
\begin{enumerate}
    \item Platt \cite{platt_numerical_2016}  has verified computationally that a Landau--Siegel zero does not exist for $q \leq 400,000$, so the above statement is known to be vacuous for such $q$. 
    \item  See the end of \cref{sec:Corollaries} for comments on our choice of constants $c_1, c_2, c_3,$ and $c_4$.
\end{enumerate}
\end{remarks}

We also establish an ineffective non-explicit form with better constants. 

\begin{corollary} \label{cor:non-explicit} Fix $\epsilon > 0$. Let $T \geq 4$ be real and $q > 400,000$ be an integer.  Assume $\sL_q(s)$ given by \eqref{eqn:sL} has a real zero at $s=\beta_1 > 1 - \frac{1}{10 \log q}$. If $\rho = \beta+i\gamma$ is another zero of $\sL_q(s)$ satisfying $\beta > \tfrac{1}{2}$ and $|\gamma| \leq T$, then \eqref{eqn:main} holds  with  
\[
c_1 = \frac{4}{3}+\epsilon, \quad c_2 = \frac{2}{3}+\epsilon, \quad c_3 = 0, \quad c_4 = \frac{1}{24},
\]
provided $qT \geq C(\epsilon)$ for some ineffective sufficiently large positive constant $C(\epsilon)$.  
\end{corollary}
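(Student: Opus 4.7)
The plan is to deduce Corollary~\ref{cor:non-explicit} from the paper's main theorem, which I expect to yield a zero-repulsion estimate of the same shape as \eqref{eqn:main} but with a denominator of the form $A(\xi)\log q + B(\xi)\log T + C(\xi)$ depending on free parameters $\xi$ (for example, a truncation abscissa of a contour integral, a smoothing scale, and/or the sieve level in a Selberg-type construction). Corollary~\ref{cor:explicit} is then obtained by choosing $\xi$ to optimize $(A,B,C)$ numerically while maintaining uniform validity for all $q > 400{,}000$ and $T \geq 4$. The non-explicit constants $\tfrac{4}{3}+\epsilon$, $\tfrac{2}{3}+\epsilon$, $0$ should instead arise as the infimum of $(A,B,C)$ as $\xi$ tends to its limiting boundary and $qT \to \infty$.

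Concretely, I would replace every uniformly valid estimate used in the proof of the explicit corollary by its sharper asymptotic counterpart. Key substitutions include: Stirling's formula applied to the Gamma factors in the logarithmic derivative of $\sL_q(s)$, which provides the asymptotic shape of the main terms with $o(1)$ errors; asymptotic estimates for the divisor- and prime-sums that appear inside the sieve construction; and the use of the limiting rather than numerically accessible positivity constant from Selberg's sieve, which should deliver the sharpening of $c_4$ from $\tfrac{1}{16}$ to $\tfrac{1}{24}$. Each such substitution introduces only an error of size $o(1)$ as $qT \to \infty$, which is absorbed into the $\epsilon$ perturbation of the leading coefficients. The vanishing of $c_3$ reflects the fact that this constant, in the explicit treatment, collects all the bounded terms that arise from non-asymptotic estimation; in the asymptotic regime these contribute only to the implicit $o(1)$.

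The main obstacle is careful bookkeeping: verifying that each lower-order error term is genuinely $o(1)$ uniformly in the zero $\rho = \beta + i\gamma$ under consideration, and confirming that the limiting choice of $\xi$ preserves the positivity and sign structure of the argument --- in particular, that the favorable contribution from the Landau--Siegel zero $\beta_1$ (which drives the repulsion phenomenon) survives the passage to the limit. A secondary difficulty is certifying that the optimizing limit of $\xi$ balances the $\log q$ and $\log T$ contributions in the ratio $2:1$ dictated by the target constants. The ineffectivity of $C(\epsilon)$ stems precisely from this absorption step: the implicit constants in the $o(1)$ asymptotic bounds are not tracked, which is exactly the trade-off that Corollary~\ref{cor:explicit} reverses at the cost of larger leading constants.
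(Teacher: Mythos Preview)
Your proposal misidentifies where the improved constants actually come from. You treat the free parameters in the main theorem as internal optimization knobs (contour abscissa, smoothing scale, sieve level) whose limiting values yield $c_1=\tfrac{4}{3}+\epsilon$, $c_2=\tfrac{2}{3}+\epsilon$, $c_4=\tfrac{1}{24}$. But in the paper the relevant parameters are $\theta$ and $\epsilon$ from Hypotheses~\ref{hypothesis_A} and~\ref{hypothesis_B}, and these are \emph{external arithmetic inputs}: $\theta$ is the available subconvexity exponent for $L(\tfrac12+it,\psi)$, and $\epsilon$ is the exponent in the Siegel upper bound $1-\beta_1>Bq^{-\epsilon}(\log q)^{-2}$. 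Theorem~\ref{thm:main} gives $\log M=(8\theta+2\epsilon)\log q+4\theta\log T+\log K$ with $c_4=\theta/4$. The explicit corollary uses convexity ($\theta=\tfrac14$) and Bordignon ($\epsilon=\tfrac12$), giving $c_1\to 3$, $c_2\to 1$, $c_4=\tfrac{1}{16}$. The non-explicit corollary instead invokes the Petrow--Young Weyl bound (Theorem~\ref{thm:PY}) to take $\theta=\tfrac16+o(1)$ and Siegel's ineffective theorem (Theorem~\ref{thm:Siegel}) to take $\epsilon$ arbitrarily small; this is precisely what yields $8\theta+2\epsilon\to\tfrac43$, $4\theta\to\tfrac23$, and $\theta/4\to\tfrac{1}{24}$.

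Consequently, your explanations for each constant are off the mark. The drop $c_4:\tfrac{1}{16}\to\tfrac{1}{24}$ is not a sharper Selberg sieve constant; it is $\theta/4$ with $\theta$ moving from $\tfrac14$ to $\tfrac16$. The vanishing of $c_3$ does come from absorbing $\log K$ into $o(\log q)$, but that absorption is only possible once $A=A(\epsilon)$ and $B=B(\epsilon)$ are fixed, and the ineffectivity of $C(\epsilon)$ is inherited specifically from Siegel's theorem, not from unspecified $o(1)$ errors in Stirling or sieve asymptotics. (Indeed, the paper's argument uses no logarithmic derivative of $\sL_q$ and no Gamma-factor Stirling expansion at all.) No amount of internal optimization of the sieve or contour parameters will take you below $c_2=1$ without subconvexity, so your proposed route cannot reach the target constants.
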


Let us provide some historical background for \cref{cor:explicit,cor:non-explicit}. The constants $c_1$ and $c_2$ are the most important in this context, so we shall focus on them as well as the uniformity of $\beta, q,$ and $T$. All results cited in this paragraph are effective unless otherwise specified. Linnik \cite{linnik_least_1944-1} was the first to establish \eqref{eqn:main} with non-explicit constants $c_1$ and $c_2$ for small values of $T$. This theorem was a pivotal innovation to his celebrated result on the least prime in an arithmetic progression \cite{linnik_least_1944}. Jutila \cite{jutila_linniks_1977} proved a strong form of \eqref{eqn:main} for any fixed $c_1 = c_2 > 2$ provided $\beta > 5/6$ and $qT$ is sufficiently large. Building on Jutila's work, Graham \cite{graham_applications_1977} in his PhD thesis established \eqref{eqn:main} for  any fixed $c_1 = c_2 > 3/2$,  provided $T \leq q^{o(1)}$ and $q$ is sufficiently large. The result is ineffective, but he notes that it could be made effective with modifications to the statement and proof. Using sophisticated optimization techniques, Heath-Brown \cite{heath-brown_zero-free_1992} showed \eqref{eqn:main} holds with any fixed $c_1 = c_2 >  11/12$, provided $\beta > 1 - (\log\log\log q)/(3 \log q)$,  $T = 4,$ and $q$ is sufficiently large. Note that \cref{cor:non-explicit} gives the best values for $c_1$ and $c_2$ in all cases except for the narrow region considered by Heath-Brown; Graham \cite{graham_applications_1977} would have obtained the same $c_1$ as in \cref{cor:non-explicit} using currently available subconvexity bounds, but still with the constraint $T \leq q^{o(1)}$. 

Overall, the main constants $c_1$ and $c_2$ have steadily improved with many innovations, but all the aforementioned results have various restrictions on $\beta, q,$ or $T$, and none of them are completely explicit. These restrictions can vary from harmless to serious, depending on the desired application. The first completely explicit version of \eqref{eqn:main} was recently proved by Thorner--Zaman \cite{thorner_explicit_2024}. Their result implies that
\begin{equation} \label{eqn:ThornerZaman}
c_1 = 54.2, \quad c_2 = 16.9, \quad c_3 = 104.7, \quad c_4 = 0.0002,
\end{equation}
are admissible for \eqref{eqn:main}. While they do not have any meaningful restrictions on $q, T,$ or $\beta$, the constants $c_1$ and $c_2$ are notably weaker compared to some of the previously mentioned non-explicit works. This deficiency occurs because Thorner--Zaman apply the power sum method (namely Theorem 2.2 of Kadiri--Ng--Wong \cite{kadiri-ng-wong_least_prime_2019}) which differs greatly from the approaches of Jutila, Graham, and Heath-Brown. The primary aim of this article is to improve this explicit bound. 

Indeed, \cref{cor:explicit} achieves this goal by adapting the more efficient sieve methods of Graham \cite{graham_applications_1977} and hence obtains results of similar  strength. We construct a mollified sum to detect the non-trivial zero $\rho$ by using two kinds of sieves: one by the exceptional character $\chi_{1}$, and one inspired by Selberg's sieve due to Graham. This construction allows us to produce two competing estimates for the mollified sum, one which exploits the existence of the zero $\rho = \beta+i\gamma$ and the other which exploits the existence of the Siegel zero $\beta_1$.   These estimates do not conflict provided $\beta$ is sufficiently repelled by $\beta_1$, which completes the proof.

Two essential inputs for this argument include the knowledge about the growth of Dirichlet $L$-functions on the critical line and the available lower bounds for $1-\beta_1$. Indeed, both corollaries are deduced from our main theorem: a more flexible explicit version of Deuring--Heilbronn phenomenon for Dirichlet $L$-functions. 

\begin{theorem}\label{thm:main}
   Let $q > 400,000$ and $T \geq 4$. Fix $A,B \geq 1$, $0 < \theta \leq \frac{1}{4}$, and $0 < \epsilon \leq \frac{1}{2}$. Assume that every primitive Dirichlet character $\psi \pmod{q_{\psi}}$ satisfies 
    \[
    |L(\tfrac{1}{2}+it,\psi)| \leq  A  \big( q_{\psi}(1+|t|))^{\theta}
    \quad \text{ for } t \in \R.
    \]
     Assume $\sL_q(s)$ given by \eqref{eqn:sL} has a real zero at $s=\beta_1$ satisfying
     \[
     1 - \frac{1}{10 \log q} < \beta_1 < 1 - \frac{B}{q^{\epsilon} (\log q)^2}. 
     \]
     If $\rho = \beta+i\gamma$ is another zero of $\sL_q(s)$ satisfying $\beta > \tfrac{1}{2}$ and $|\gamma| \leq T$, then
\begin{equation} \label{eqn:main-abstract}
\beta  < 1 -  \frac{\log\big(\frac{\theta}{4(1-\beta_1) \log M} \big)}{\log M},
\end{equation}
where 
\begin{equation}\label{eqn:M}
M = K q^{8\theta+2\epsilon}   T^{4\theta} \quad \text{and} \quad  K = \edit{(3 \times 10^{16})} A^{\edit{8}} B^{-2} e^{8(\log q)^{3/4}} \edit{(\log(14A^2 q^3))^{24} (\log q)^4} . 
\end{equation}
\end{theorem}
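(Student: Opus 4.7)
The plan is to follow the sieve-based strategy outlined in the introduction: construct a mollified Dirichlet sum that detects the putative non-trivial zero $\rho$, estimate it in two independent ways, and derive a contradiction unless $\beta$ is repelled sufficiently far from $\beta_1$. First I would introduce a mollified sum of the shape
\[
\mathcal{S}(\chi) := \sum_{n \leq x} \lambda_n (\chi \chi_1)(n) \Lambda^{\star}(n) \, n^{-\rho},
\]
where $\lambda_n$ are Selberg-type sieve weights (a truncated Möbius convolution supported on squarefree $d \leq D$, weighted by an optimally chosen polynomial in $\log(D/d)$ following Graham \cite{graham_applications_1977}) and $\Lambda^{\star}$ is a suitable prime-detector (von Mangoldt or a Heath-Brown identity surrogate). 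The mollifier is twisted by the exceptional character $\chi_1$ so that the Siegel zero $\beta_1$, through the near-vanishing of $L(s,\chi_1)$ at $s=1$, contributes a genuine main term to the analysis.

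Next, by Mellin inversion I would express $\mathcal{S}(\chi)$ as a contour integral and shift the line of integration into the critical strip, producing two complementary estimates. On the one hand, the residue at $\rho$ yields a \emph{lower bound} of shape $|\mathcal{S}(\chi)| \gg x^{1-\beta}/(1-\beta_1)$, where the factor $(1-\beta_1)^{-1}$ is forced on us by the sieve coefficients incorporating values of $L(s,\chi_1)$ that blow up near the Siegel zero. On the other hand, shifting the contour all the way down to $\Re(s)=1/2$ and invoking the assumed convexity-type bound $|L(\tfrac{1}{2}+it,\psi)| \leq A(q_\psi(1+|t|))^{\theta}$ produces an \emph{upper bound} of shape $|\mathcal{S}(\chi)| \ll x^{1-\beta} M^{\star}$ for an explicit parameter $M^{\star}$ governed by the sieve length $D$, the constant $A$, and the height $T$. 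This is precisely where the exponents $8\theta$ on $q$ and $4\theta$ on $T$ in the definition of $M$ are generated.

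Comparing the upper and lower bounds then yields an inequality of the form
\[
(1-\beta_1) \, M^{1-\beta} \gg \frac{\theta}{4 \log M},
\]
which, upon taking logarithms and rearranging, is precisely \eqref{eqn:main-abstract}. Optimizing $x$ and $D$ at a suitable power of $qT$ will determine the admissible shape of $M$; the constants $A^{20}$, $B^{-2}$, $e^{8(\log q)^{3/4}}$, and $(\log q)^{28}$ in $K$ should then arise respectively from accumulated applications of the critical-line bound, the hypothesized lower bound $1-\beta_1 > B/(q^{\epsilon}(\log q)^2)$, the standard Korobov--Vinogradov zero-free region absorbing the contributions of other nearby zeros picked up along the contour shift, and logarithmic losses from the Selberg polynomial weights and arithmetic sums over sifted moduli.

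The principal obstacle will be the explicit bookkeeping. One must optimize the Selberg polynomial and sieve length to extract the sharp exponents $8\theta$ on $q$ and $4\theta$ on $T$ — any looseness here multiplies into $M$ substantially, destroying the improvement over \eqref{eqn:ThornerZaman}. Next, the contour must be controlled uniformly for zeros constrained only by $\beta > \tfrac{1}{2}$ rather than $\beta$ close to $1$, which precludes the convenient Taylor expansions around $s=1$ used in the classical Linnik setting and instead forces rigorous integrated estimates across the full critical strip — this is precisely where the critical-line hypothesis enters quantitatively. Finally, one must isolate the role of $B$ so that it enters only through $B^{-2}$ in $K$, and separate the primitive inducing character of $\chi$ from $\chi$ itself in order to legitimately invoke the critical-line hypothesis, all without inflating the exponents in $M$ by additional logarithmic factors.
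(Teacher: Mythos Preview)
Your proposal has the core mechanism inverted, and the mollified sum is not the one the paper uses. The paper's sum has no prime detector $\Lambda^{\star}$ at all; it is
\[
S = \sum_{n\le N} a(n)\,\chi(n)\Big(\sum_{d\mid n}\theta_d\Big)^{2} n^{-\rho}\Big(1-\tfrac{n}{N}\Big),\qquad a(n)=\sum_{d\mid n}\chi_1(d),
\]
with $(\theta_d)$ the Selberg weights of \S\ref{sec:SelbergSieve}. The \emph{lower} bound for $|S|$ is not a residue at $\rho$; it is simply the $n=1$ term, giving $|S|\ge 1-N^{-1}-S_0$. The zero $\rho$ enters in the opposite way from what you describe: since $F(s,\chi)=L(s,\chi)L(s,\chi\chi_1)$ vanishes at $\rho$, the pole of $N^s/s(s+1)$ at $s=0$ is \emph{cancelled}, so after shifting to $\Re(s)=\tfrac12-\beta$ only the pole at $s=1-\rho$ (from $\zeta$) survives. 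That residue is proportional to $L(1,\chi_1)$, and \cref{lem:RatioZeroL1} converts this into a factor of $(1-\beta_1)$ in the \emph{upper} bound for $|S|$ (and likewise for $S_0$ via $L(\beta_1,\chi_1)=0$). Thus $(1-\beta_1)$ sits on the small side of the inequality, not as a $(1-\beta_1)^{-1}$ on the large side.

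Two further points: there is no Korobov--Vinogradov input and no ``other nearby zeros'' to absorb---the contour shift in \cref{prop:ZeroDetector} meets only the single pole at $1-\rho$ and the line integral on $\Re(s)=\tfrac12$. The factor $e^{8(\log q)^{3/4}}$ in $K$ comes from the imprimitivity correction $\prod_{p\mid q}(1-\chi(p)p^{-s})$ in \cref{lem:Phragmen}, not from a zero-free region. And the exponent $2\epsilon$ on $q$ in $M$ appears only at the very end, when the choice $N\asymp (1-\beta_1)^{-2}$ is bounded via \cref{hypothesis_B}; it does not participate in the sieve optimization at all. As written, your sketch would not close because the competing estimates are assigned to the wrong sides.
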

Roughly speaking, \cref{thm:main} gives an admissible choice of  $c_1,c_2,c_3,c_4$ in \eqref{eqn:main} satisfying
\[
c_1 \to  8\theta+2\epsilon, \qquad c_2 \to 4\theta, \qquad c_3 \to 0, \qquad c_4 \to \theta/4, 
\]
as $qT \to \infty$. 
Our goal in proving \cref{thm:main} was to minimize these ``asymptotic'' values of $c_1$ and $c_2$, because they tend to dictate many applications. For the sake of simplicity, we did not seriously attempt to minimize the quantity $K$ in \eqref{eqn:M}, which primarily impacts the value of $c_3$ in \cref{cor:explicit}. An interested reader may wish to explore such improvements. 

\cref{thm:main} quickly reveals the inputs to both corollaries. \cref{cor:explicit} relies on an explicit convexity estimate ($\theta = \frac{1}{4}$) due to Thorner--Zaman \cite{thorner_explicit_2024}, and an explicit effective Siegel bound ($\epsilon=\frac{1}{2}$) due to Bordignon \cite{bordignon_explicit_2019}. \cref{cor:non-explicit} benefits from the recent subconvex Weyl bound ($\theta > \frac{1}{6}$) spectacularly proved by Petrow and Young \cite{petrow_weyl_2020,petrow_fourth_2023}, and the classic ineffective bound ($\epsilon > 0$) of Siegel \cite{siegel_uber_1935}. Moreover, one can verify that \cref{thm:main} recovers a more uniform version of Graham's ineffective estimate \cite{graham_applications_1977} with $c_1=c_2 > \frac{3}{2}$ in \eqref{eqn:main} by instead applying Burgess' \cite{burgess_character_1963} subconvexity bound ($\theta > \frac{3}{16}$), which was the best available at the time.

The proof of \cref{thm:main} adapts Graham's argument \cite{graham_applications_1977} with two refinements. First, we maintain complete uniformity by carefully tracking the dependencies on $\beta, q,$ and $T$ at every step.   As a result, we remove Graham's constraint of $T \leq q^{o(1)}$. The calculations are fairly delicate since even a loss of $\log(qT)$ can render the final result useless. Second, we prove a stronger bound for the Dirichlet polynomial defined by the Selberg sieve. Unlike before, this estimate is sensitive to the severity of the Landau--Siegel zero; see the remark after \cref{lem:G-estimate} for details.

Finally, we outline the organization of the paper. \cref{sec:Hypotheses} defines hypotheses for subconvexity and Landau--Siegel zeros with general parameters, and describes recent results in terms of such parameters. \cref{sec:Corollaries} quickly deduces \cref{cor:explicit,cor:non-explicit} assuming \cref{thm:main}. \cref{sec:SelbergSieve} prepares a type of Selberg sieve and establishes standard estimates for sums involving Dirichlet characters. \cref{sec:Proof} combines these elements to prove \cref{thm:main}.

\section{Subconvexity and Landau--Siegel zeros}
\label{sec:Hypotheses}

In this section, we formally state the two general hypotheses appearing in \cref{thm:main} and review some of the best available progress towards them.  If explicit improvements are made regarding subconvexity or Landau--Siegel zeros, then \cref{thm:main} immediately yields new estimates with improved constants. Our first hypothesis is an explicit form of subconvexity bound that holds uniformly for Dirichlet $L$-functions.

\begin{hypothesis}\label{hypothesis_A}
Fix constants $A \geq 1$ and $0 < \theta \leq \tfrac{1}{4}$. Assume for $t \in \R$ and any primitive Dirichlet character $\psi \pmod{q_{\psi}}$  that
\[
    |L(\tfrac{1}{2}+it,\psi)| \leq A \big( q_{\psi} (1+|t|) \big)^{\theta}. 
\]
\end{hypothesis}

Below are some of the current best instances of \cref{hypothesis_A}.

\begin{theorem}[Petrow--Young] \label{thm:PY}
For $t \in \R$ and any primitive character $\psi \pmod{q_{\psi}}$,
\[
|L(\tfrac{1}{2}+it,\psi)| \ll_{\epsilon} (q_{\psi}(1+|t|))^{1/6+\epsilon},
\]		
for any $\epsilon > 0$. 
\end{theorem}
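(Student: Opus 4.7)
The statement is the uniform Weyl-strength subconvex bound for Dirichlet $L$-functions in both the conductor and $t$-aspects; it is the main theorem of two papers of Petrow and Young building on the cubic moment method of Conrey--Iwaniec for real characters. My plan would be to follow that cubic moment approach, which naturally breaks into four stages.

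First, apply the approximate functional equation to reduce $L(1/2+it,\psi)$ to a Dirichlet polynomial of length $(q_\psi(1+|t|))^{1/2}$; bounding it termwise recovers the convexity estimate $\theta=1/4$. To beat convexity, the strategy is to majorize $|L(1/2+it,\psi)|^3$ by a spectrally averaged cubic moment in which the target term appears by positivity. Concretely, one studies an average of the shape
\[
\sum_{f} h(t_f)\, \varepsilon_f\, L(1/2,f)^{2}\, L(1/2, f\otimes \psi),
\]
where $f$ ranges over cuspidal automorphic representations on $\mathrm{GL}_2$ of level dividing $q_\psi$ together with the Eisenstein spectrum at parameter $it$, $h$ is a nonnegative spectral weight, and $\varepsilon_f$ is the root number. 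The Eisenstein contribution at parameter $it$ reproduces a constant multiple of $|L(1/2+it,\psi)|^3$.

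Second, expand each $L$-factor inside the cubic moment by approximate functional equations and apply the Kuznetsov trace formula to convert the spectral sum into an arithmetic sum of Kloosterman sums twisted by $\psi$. Third, bound the arithmetic side using Poisson summation on the long variable, Weil's bound for individual Kloosterman sums, and careful bookkeeping of divisor functions to control the off-diagonal; the output is a bound of the form $\ll_\epsilon (q_\psi(1+|t|))^{1/2+\epsilon}$ for the cubic moment, and taking cube roots gives the claimed exponent $1/6+\epsilon$.

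The main obstacle is twofold. For real $\psi$, the individual terms of the cubic moment are nonnegative by Waldspurger's formula, which is what makes Conrey--Iwaniec work; for a general primitive $\psi$ this positivity fails, and one must instead exploit a signed cubic moment together with a recombination argument that recovers a usable lower bound for the diagonal — this is the central innovation of Petrow--Young over Conrey--Iwaniec. Simultaneously, uniformity in $q_\psi$ when the conductor has nontrivial cube-full part requires a delicate local analysis: newform theory at ramified primes, local Whittaker models, and Kloosterman sums to composite moduli must all be handled via a $p$-adic stationary-phase argument. Coupling both refinements while preserving the Weyl exponent jointly in $q_\psi$ and $t$, and ensuring polynomial (not sub-polynomial) implied constants in the archimedean parameter, is the technical heart of the proof.
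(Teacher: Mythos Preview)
The paper does not prove this statement at all; its entire proof is a one-line citation to Theorem~1.1 of Petrow--Young, noting that it improves upon Burgess. Your proposal instead sketches the actual cubic-moment argument underlying the Petrow--Young papers, which is a far more ambitious undertaking than the paper requires. Your high-level outline is broadly on target---the method does proceed via a spectral cubic moment containing the target $L$-value in its Eisenstein contribution, the Kuznetsov formula, and delicate local analysis at ramified primes---but some specifics are inaccurate. The moment Petrow--Young study is $\sum_f L(1/2, f \otimes \chi)^3$ over newforms $f$ of level $q$ and nebentypus $\overline{\chi}^2$, not the mixed moment $L(1/2,f)^2 L(1/2,f\otimes\psi)$ you wrote; and the positivity mechanism for complex $\chi$ is not a ``signed moment with recombination'' but rather genuine nonnegativity of these central values, established via root-number and functional-equation analysis in the cube-free case and via the fourth-moment-along-a-coset machinery for general conductors. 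For the purposes of this paper, which uses the bound only as a black-box input to the main theorem, the citation is the appropriate and complete proof.
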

\begin{proof}
See Theorem 1.1 of \cite{petrow_fourth_2023}, which improves upon Burgess \cite{burgess_character_1963}. 
\end{proof}

\begin{proposition}[Thorner--Zaman] \label{lem:ExplicitConvexity}
For $t \in \R$ and any primitive character $\psi \pmod{q_{\psi}}$,
	\[
	|L(\tfrac{1}{2}+it,\psi)| \leq 2.97655(q_{\psi} (1+|t|))^{1/4}.
	\]
\end{proposition}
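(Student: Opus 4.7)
The plan is to apply the Phragmén--Lindelöf convexity principle to $L(s,\psi)$ in a strip of the form $-\delta \leq \Re(s) \leq 1+\delta$ for a small parameter $\delta > 0$ to be chosen at the end. The right edge is controlled by the Dirichlet series: absolute convergence gives the trivial uniform bound
\[
|L(1+\delta+it,\psi)| \leq \zeta(1+\delta).
\]
The left edge is controlled by the completed functional equation for primitive Dirichlet $L$-functions,
\[
L(s,\psi) = \epsilon(\psi) \bigl(q_\psi/\pi\bigr)^{1/2-s} \frac{\Gamma(\tfrac{1-s+a}{2})}{\Gamma(\tfrac{s+a}{2})} L(1-s,\bar\psi),
\]
where $a \in \{0,1\}$ records the parity of $\psi$ and $|\epsilon(\psi)|=1$. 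Applying the right-edge bound to $L(1-s,\bar\psi)$ and estimating the $\Gamma$-ratio by an explicit Stirling-type inequality valid for all $t \in \R$ produces a bound
\[
|L(-\delta+it,\psi)| \leq C(\delta) \bigl(q_\psi(1+|t|)\bigr)^{1/2+\delta}
\]
with an explicit $C(\delta)$.

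Next I would invoke a quantitative Phragmén--Lindelöf principle (for instance, in Rademacher's form) to interpolate between these two boundary bounds. Since $L(s,\psi)$ is entire for $q_\psi > 1$ and of finite order, and the boundary bounds grow polynomially in $|t|$, the principle yields
\[
|L(\tfrac{1}{2}+it,\psi)| \leq D(\delta) \bigl(q_\psi(1+|t|)\bigr)^{\tfrac{1}{4} + O(\delta)}
\]
for an explicit $D(\delta)$. Choosing $\delta$ small (but not too small, to keep $\zeta(1+\delta)$ tame) balances the blowup from $\zeta(1+\delta)$ against the exponent inflation from $\tfrac{1}{4}$ to $\tfrac{1}{4}+\tfrac{\delta}{2}$ and produces the claimed numerical constant $2.97655$. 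For the principal character case $q_\psi=1$, a direct explicit convexity bound for $\zeta(\tfrac{1}{2}+it)$, obtained either from \cref{thm:HPY} or by a residue-adjusted Phragmén--Lindelöf around the pole at $s=1$, disposes of the $\zeta$ case separately.

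The main obstacle is producing the clean constant $2.97655$ uniformly in both $q_\psi$ and $t$, and not merely in the asymptotic regime where $q_\psi(1+|t|)$ is large. This requires a Stirling estimate for the $\Gamma$-ratio that is sharp for all real $t$, typically a Binet-type form with a controlled error term; one must also verify the calculation for both parities $a \in \{0,1\}$, since the even case involves $\Gamma(s/2)$ and is more sensitive near $s=0$. Once those bookkeeping tasks are discharged, optimizing over $\delta$ to balance $C(\delta)$, $D(\delta)$, and $\zeta(1+\delta)$ is a routine numerical exercise.
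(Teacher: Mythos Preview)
The paper does not actually prove this proposition: its entire proof reads ``This follows from Proposition 2.10 of \cite{thorner_explicit_2024}, which improves on Hiary \cite{hiary_explicit_2016}.'' In other words, the statement is imported as a black box from the literature, not established here.

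Your sketch is a reasonable outline of the standard convexity argument (functional equation plus Phragm\'en--Lindel\"of, with a numerical optimization over the strip width), and is presumably close in spirit to what Thorner--Zaman actually do in the cited paper. But for the purposes of comparing against \emph{this} paper's proof, the appropriate response is simply to cite the external result; there is nothing to reprove. If you intend to reproduce the full argument rather than cite it, be aware that obtaining the specific constant $2.97655$ uniformly for all $q_\psi \geq 1$ and all $t \in \R$ (including small $t$) requires more care than your sketch indicates, and you would need to consult \cite{thorner_explicit_2024} directly for the precise Stirling and Rademacher-type inputs they use.
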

\begin{proof}
    This follows from Proposition 2.10 of \cite{thorner_explicit_2024}, which improves on Hiary \cite{hiary_explicit_2016}.
\end{proof}
\begin{remark}
     Explicit improvements of \cref{lem:ExplicitConvexity} are available in the literature; see, for example, \edit{\cite{hiary_improved_2024,francis_explicit_2022}}  for explicit subconvexity estimates. These can lead to some improvements over \cref{cor:explicit} but require more intricate calculations and a modified \cref{hypothesis_A} with more flexible exponents for $q_{\psi}$ \edit{and} $(1+|t|)$. We opted for simplicity. 
\end{remark}

Assuming \cref{hypothesis_A}, we extend the bound to a larger subregion of the critical strip.

\begin{lemma} \label{lem:Phragmen} Let $q \geq 3$ be an integer. Assume \cref{hypothesis_A} holds with fixed $A \geq 1$ and $0 < \theta \edit{\leq} \tfrac{1}{4}$.      Fix $0 < \eta < 1$. If $\psi \pmod{q_{\psi}}$ is a primitive character,  then  
    \[
    |L(\sigma+it,\psi)| \leq \Big( A  ( 2q_{\psi}(1+|t|))^{\theta} \Big)^{\tfrac{1+\eta-\sigma}{1/2+\eta}} \Big(1+\frac{1}{\eta} \Big)^{\tfrac{\sigma-1/2}{1/2+\eta}}.
    \] 
   for $\frac{1}{2} \leq \sigma \leq 1+\eta$. Additionally, if $\chi \pmod{q}$  is any character, then 
    \[
    |L(\sigma+it,\chi)| \leq \Big( A  ( 2q(1+|t|))^{\theta} \Big)^{\tfrac{1+\eta-\sigma}{1/2+\eta}} \Big(1+\frac{1}{\eta} \Big)^{\tfrac{\sigma-1/2}{1/2+\eta}} \exp\Big((\log q)^{\tfrac{2-\sigma}{2}} \Big)
    \]
    for $\frac{1}{2} \leq \sigma \leq 1+\eta$. 
\end{lemma}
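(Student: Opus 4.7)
The plan is to derive both inequalities from the Phragm\'en--Lindel\"of convexity principle applied in the vertical strip $\tfrac{1}{2} \leq \Re(s) \leq 1 + \eta$. On the right edge, the Dirichlet series representation gives
\[
|L(1+\eta + it, \psi)| \leq \zeta(1+\eta) \leq 1 + \tfrac{1}{\eta},
\]
while on the left edge \cref{hypothesis_A} gives $|L(\tfrac{1}{2}+it, \psi)| \leq A(q_{\psi}(1+|t|))^{\theta}$. For $\psi$ primitive and non-trivial, $L(s,\psi)$ is entire, and polynomial growth uniformly in $|t|$ throughout the strip (required to invoke Phragm\'en--Lindel\"of) follows from the functional equation together with the trivial bound for $\Re(s) \geq 1 + \eta$. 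Interpolating linearly in $\sigma$ on the logarithmic scale with weight $\lambda = \frac{1+\eta-\sigma}{1/2+\eta}$ then produces the stated estimate, up to the factor of $2^{\theta}$ embedded in the first parenthesis.

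That extra $2^{\theta}$ absorbs a small technical adjustment. When $\psi$ is the trivial primitive character, $L(s,\psi) = \zeta(s)$ has a pole at $s = 1$, so Phragm\'en--Lindel\"of must instead be applied to a pole-removing auxiliary function such as $(s-1)L(s,\psi)/(s+c)$ for a suitable constant $c$; the modest inflation this causes on both boundary bounds is easily absorbed into $2^{\theta}$. For the second inequality, given any $\chi \pmod q$ induced by the primitive character $\chi^{*} \pmod{q_{\chi^{*}}}$ with $q_{\chi^{*}} \mid q$, I would factor
\[
L(s, \chi) = L(s, \chi^{*}) \prod_{\substack{p \mid q \\ p \nmid q_{\chi^{*}}}} \Big(1 - \chi^{*}(p) p^{-s}\Big),
\]
apply the primitive bound to $\chi^{*}$ (noting $q_{\chi^{*}} \leq q$), and bound the finite Euler product via $\log(1 + p^{-\sigma}) \leq p^{-\sigma}$. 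It remains to show $\sum_{p \mid q} p^{-\sigma} \leq (\log q)^{(2-\sigma)/2}$ uniformly in the strip, which follows from a Mertens-type estimate: the primes dividing $q$ have product at most $q$, so the sum is maximized at the smallest primes $p_1, \dots, p_k$ satisfying $p_1 \cdots p_k \leq q$, for which $p_k \lesssim \log q$ by the prime number theorem, after which partial summation yields the claimed exponent.

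The main obstacle is keeping the constants crisp. The pole of $\zeta$ at $s = 1$ must be removed in a way that fits within the $2^{\theta}$ slack rather than producing a larger loss, and the Mertens-type estimate must be worked out to yield the exponent $(2-\sigma)/2$ uniformly on $[\tfrac{1}{2}, 1+\eta]$. Neither ingredient is deep, but sloppiness at either step would propagate into $K$ in \cref{thm:main}, and hence into $c_3$ in \cref{cor:explicit}.
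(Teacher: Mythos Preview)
Your overall strategy matches the paper's: apply Phragm\'en--Lindel\"of in the strip $\tfrac{1}{2} \leq \sigma \leq 1+\eta$ for the primitive case, then pass to general $\chi$ via the finite Euler product over $p \mid q$. Two details differ.

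First, the factor $2^{\theta}$ in the paper does not arise from pole removal. The paper rewrites the critical-line bound as $|L(\tfrac{1}{2}+it,\psi)| \leq A(2q_{\psi}|1+it|)^{\theta}$ using $(1+|t|) \leq 2|1+it|$, so that the edge bound has the form $C|Q+s|^{\alpha}$ required by Rademacher's version of Phragm\'en--Lindel\"of (which, incidentally, already accommodates a simple pole at $s=1$, so no separate pole-removal step is taken).

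Second, for the prime sum $\sum_{p \mid q} p^{-\sigma}$ the paper does not use a Mertens-type extremal argument but instead applies H\"older's inequality with exponents $\tfrac{2}{\sigma}$ and $\tfrac{2}{2-\sigma}$:
\[
\sum_{p \mid q} p^{-\sigma} = \sum_{p \mid q} \frac{(\log p)^{(2-\sigma)/2}}{p^{\sigma}(\log p)^{(2-\sigma)/2}} \leq \Big(\sum_{p \mid q} \frac{1}{p^{2}(\log p)^{(2-\sigma)/\sigma}}\Big)^{\sigma/2}\Big(\sum_{p \mid q} \log p\Big)^{(2-\sigma)/2} \leq (\log q)^{(2-\sigma)/2},
\]
using $\sum_{n \geq 2}(n^{2}\log n)^{-1} < 1$ and $\sum_{p \mid q}\log p \leq \log q$. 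This delivers the exponent $(2-\sigma)/2$ with constant $1$ in one line; your partial-summation route would in fact yield the sharper exponent $1-\sigma$ asymptotically, but making it fully explicit with a clean constant would take considerably more care.
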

\begin{proof}
For the primitive character $\psi$, \cref{hypothesis_A} and the inequality $(1+|t|) \leq 2|1+it|$ together imply
\[
    |L(\tfrac{1}{2}+it,\psi)|  \leq  A \big( 2q_{\psi}(|1+it|))^{\theta}.
\]
On the other hand, the Dirichlet series representation implies $|L(1+\eta+it)| \leq \zeta(1+\eta) \leq 1 + \eta^{-1}$. The result follows from the Phragmen--Lindel\"{o}f convexity principle. The general case for $\chi$ follows from the primitive case and the additional inequality
\[
    \Big| \prod_{p \mid q} \Big( 1 - \frac{\chi(p)}{p^{\sigma+it}} \Big) \Big| \leq \exp\Big((\log q)^{\frac{2-\sigma}{2}} \Big).
\]
To justify this last estimate, notice that 
\[
    \Big| \prod_{p \mid q} \Big( 1 - \frac{\chi(p)}{p^{\sigma+it}} \Big) \Big| \leq \prod_{p \mid q} \Big( 1 + \frac{1}{p^{\sigma}} \Big)  = \exp\Big( \sum_{p \mid q} \log\Big(1+p^{-\sigma}\Big)  \Big) \leq \exp\Big( \sum_{p \mid q} p^{-\sigma}  \Big),
 \]
where the last equality follows since $\log(1+x) < x$ for $x > 0$. By H\"{o}lder's inequality, 
\begin{align*}
    \sum_{p \mid q} p^{-\sigma} = \sum_{p \mid q}  \frac{ (\log p)^{\frac{2-\sigma}{2}}}{p^{\sigma} (\log p)^{\frac{2-\sigma}{2}} } 
    &\leq  \Big( \sum_{p \mid q} \frac{1}{p^2 (\log p)^{\frac{2-\sigma}{\sigma}} } \Big)^{\frac{\sigma}{2}} \Big( \sum_{p \mid q} \log p \Big)^{\frac{2-\sigma}{2}} \\
    & \leq \Big( \sum_{n=2}^{\infty} \frac{1}{n^2 \log n} \Big)^{\frac{\sigma}{2}} (\log q)^{\frac{2-\sigma}{2}}.
\end{align*}
 The desired estimate follows upon noting $\sum_{n=2}^{\infty} \frac{1}{n^2 \log n} < 1$. 
\end{proof}

Our second hypothesis is an explicit upper bound on the putative Landau--Siegel zero.

\begin{hypothesis}\label{hypothesis_B} 
Fix $B > 0$ and $0 < \epsilon \leq \tfrac{1}{2}$. For any integer $q \geq 3$, if $\sL_q(s)$ given by \eqref{eqn:sL}  has a real zero at $s = \beta_1$ satisfying $\beta_1 > 1-1/(10 \log q)$, then 
\[
    \beta_1 < 1 - \frac{B}{q^{\epsilon}(\log q)^2}.
\]
\end{hypothesis}

We record the classic instance  due to Siegel \cite{siegel_uber_1935} and a recent explicit version due to Bordignon.

\begin{theorem}[Siegel] \label{thm:Siegel}
    \cref{hypothesis_B} holds with any $0 < \epsilon < \frac{1}{2}$ and some ineffective positive constant $B = B(\epsilon) > 0$.
\end{theorem}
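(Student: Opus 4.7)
The plan is to reduce \cref{hypothesis_B} to Siegel's classical lower bound $L(1,\chi) \gg_{\epsilon} q^{-\epsilon}$ for primitive real Dirichlet characters $\chi$ modulo $q$, and then convert it into an upper bound on $\beta_1$. For the conversion, suppose $\beta_1$ is a real zero of $L(s,\chi_1)$ where $\chi_1$ is primitive real modulo $q_1 \mid q$, with $\beta_1 > 1 - 1/(10\log q)$. The mean value theorem on $[\beta_1, 1]$ yields
\[
L(1,\chi_1) \;=\; (1-\beta_1)\, L'(\sigma_\ast,\chi_1)
\]
for some $\sigma_\ast \in (\beta_1, 1)$, and the standard effective estimate $|L'(\sigma,\chi_1)| \ll (\log q)^2$ in this range gives $L(1,\chi_1) \ll (1-\beta_1)(\log q)^2$. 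Hence a lower bound $L(1,\chi_1) \geq c(\epsilon)\,q^{-\epsilon}$ produces $1-\beta_1 \gg_\epsilon q^{-\epsilon}/(\log q)^2$, which is exactly the conclusion of \cref{hypothesis_B}.

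To establish the lower bound on $L(1,\chi)$, I would carry out Siegel's celebrated dichotomy. Fix $0 < \epsilon < \tfrac{1}{2}$. Either every primitive real character $\chi$ of every modulus $m$ satisfies $L(1,\chi) \geq m^{-\epsilon/2}$ (in which case we are done), or there exists a fixed but \emph{unknown} primitive real character $\chi_1^\star$ modulo some $q_1^\star$ with $L(1,\chi_1^\star) < (q_1^\star)^{-\epsilon/2}$. In the latter case, for any other primitive real character $\chi$ modulo $q$, introduce
\[
F(s) \;:=\; \zeta(s)\,L(s,\chi_1^\star)\,L(s,\chi)\,L(s,\chi_1^\star\chi).
\]
Up to finitely many Euler factors at ramified primes, $F$ is the Dedekind zeta function of the biquadratic extension of $\Q$ cut out by $\chi_1^\star$ and $\chi$, so its Dirichlet series has nonnegative coefficients. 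Moreover $F$ has a simple pole at $s=1$ with residue $\lambda = L(1,\chi_1^\star)L(1,\chi)L(1,\chi_1^\star\chi)$, which is small by hypothesis on $\chi_1^\star$. Evaluating $F(s) - \lambda/(s-1)$ at a real point $s_0 = 1 - c\epsilon$ slightly below $1$, one compares a lower bound arising from positivity of the Dirichlet coefficients (giving $F(s_0) - \lambda/(s_0-1) \geq 1$ via comparison with the Taylor expansion about $s=2$) against an upper bound from convexity-type estimates $|L(s_0,\psi)| \ll q_\psi^{c'(1-s_0)}$ applied to $\psi \in \{\chi_1^\star,\chi,\chi_1^\star\chi\}$. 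Combined with $L(1,\chi_1^\star) < (q_1^\star)^{-\epsilon/2}$, this forces $L(1,\chi) \gg_{\chi_1^\star,\epsilon} q^{-\epsilon}$, completing the proof. The ineffectiveness is intrinsic: the implied constant depends on the hypothetical $\chi_1^\star$, whose modulus cannot be bounded.

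The main obstacle is the delicate optimization of $s_0$ and of the dichotomy threshold $m^{-\epsilon/2}$ so that the exponent in the final bound is truly $\epsilon$ rather than $C\epsilon$; mis-tracking any numerical constant would erase the gain produced by positivity. A minor technicality is the degenerate case $\chi = \chi_1^\star$, for which $\chi_1^\star\chi$ becomes principal and $F$ acquires an additional pole at $s=1$; this is easily dispatched by treating that single character directly, since it contributes only one modulus class that can be absorbed into the ineffective constant.
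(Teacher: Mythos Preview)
Your sketch is the standard proof of Siegel's theorem and is essentially correct. Note, however, that the paper does not prove this statement at all: it simply records the classical result with a citation to Siegel's 1935 paper \cite{siegel_uber_1935}, treating it as known input. So there is nothing to compare against beyond observing that your argument is the textbook one the citation points to.

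One small point worth tightening: you pass to the primitive character $\chi_1$ of conductor $q_1 \mid q$, obtain $L(1,\chi_1) \gg_\epsilon q_1^{-\epsilon}$, and then need $1-\beta_1 \gg_\epsilon q^{-\epsilon}/(\log q)^2$ with the original $q$. Since $q_1 \le q$, the inequality $q_1^{-\epsilon} \ge q^{-\epsilon}$ goes the right way, and $|L'(\sigma,\chi_1)| \ll (\log q_1)^2 \le (\log q)^2$, so the conclusion indeed descends to modulus $q$. You implicitly use this but do not say it.
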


\begin{theorem}[Bordignon] \label{thm:Bordignon}
   \cref{hypothesis_B} holds with  $\epsilon = \tfrac{1}{2}$ and $B = 100$. 
\end{theorem}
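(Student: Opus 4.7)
The natural approach is to reduce to primitive characters and then compare $L(1,\chi^*)$ to $L'(\xi,\chi^*)$ for some $\xi$ near $\beta_1$. First I would observe that the real exceptional character $\chi \pmod{q}$ is induced by a unique real primitive character $\chi^* \pmod{q^*}$ with $q^* \mid q$, and $\beta_1$ is also a zero of $L(s, \chi^*)$. Since $x^{1/2}(\log x)^2$ is monotone increasing for $x \geq 3$, it suffices to establish the stated bound with $q^*$ in place of $q$. Applying the mean value theorem to $L(\sigma, \chi^*)$ on the interval $[\beta_1, 1]$ and using $L(\beta_1, \chi^*) = 0$ yields
\[
L(1, \chi^*) = (1 - \beta_1)\, L'(\xi, \chi^*) \qquad \text{for some } \xi \in (\beta_1, 1),
\]
so the problem cleanly splits into a lower bound for $L(1, \chi^*)$ and an upper bound for $|L'(\xi, \chi^*)|$.

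For the lower bound, I would invoke Dirichlet's class number formula. Real primitive characters correspond bijectively to Kronecker symbols of fundamental discriminants $d$ with $|d| = q^*$, so either $L(1, \chi^*) = 2\pi h(d)/(w \sqrt{|d|})$ (when $d < 0$) or $L(1, \chi^*) = 2 h(d) \log \epsilon_d / \sqrt{d}$ (when $d > 0$). Using $h(d) \geq 1$, $w \leq 6$, and the uniform lower bound $\epsilon_d \geq (1+\sqrt{5})/2$ on the fundamental unit, we obtain $L(1, \chi^*) \geq C_2 / \sqrt{q^*}$ with an explicit positive $C_2$.

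For the upper bound on $|L'(\xi, \chi^*)|$ in the narrow strip $\xi \in [1 - 1/(10 \log q^*), 1]$, I would use a truncated Dirichlet series representation
\[
L'(s, \chi^*) = -\sum_{n \leq X} \frac{\chi^*(n)\log n}{n^s} + \text{tail},
\]
controlling the tail by P\'olya--Vinogradov and partial summation, and optimizing $X$ around $\sqrt{q^*}$. This delivers $|L'(\xi, \chi^*)| \leq C_1 (\log q^*)^2$ with an explicit $C_1$. Combining gives
\[
1 - \beta_1 \,\geq\, \frac{C_2/C_1}{\sqrt{q^*}(\log q^*)^2} \,\geq\, \frac{C_2/C_1}{\sqrt{q}(\log q)^2}.
\]

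The main obstacle is arranging $C_2/C_1 \geq 100$, since the naive class-number-plus-P\'olya--Vinogradov strategy sketched above tends to give a ratio of order unity, a long way from the target. Closing the gap likely requires a substantially refined derivative bound (for instance via a contour deformation exploiting the explicit functional equation, $\Gamma$-factor estimates, and convexity in the style of \cref{lem:Phragmen}), or an improved lower bound on $L(1, \chi^*)$ via a Siegel--Tatuzawa-style comparison of two real characters, together with a split into ranges of $q^*$ that invokes Platt's computational verification for moderate $q^*$ and asymptotic estimates valid only for large $q^*$. Careful bookkeeping of every multiplicative constant throughout these steps will be essential.
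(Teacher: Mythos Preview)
The paper does not prove this statement at all; it is quoted from the literature. The ``proof'' consists of a single sentence citing Bordignon's two papers \cite{bordignon_explicit_2019, BORDIGNON2020481}, which establish $B=800$ for odd characters and $B=100$ for even characters respectively, and then observing that the stated theorem is weaker than both. Your proposal, by contrast, attempts to reconstruct Bordignon's argument from first principles via the mean value theorem, the class number formula, and an explicit derivative bound. That outline is indeed the skeleton of Bordignon's method, and you correctly diagnose the difficulty: the naive combination of $h(d)\geq 1$ and a P\'olya--Vinogradov-based bound on $L'$ yields a constant far below $100$, and closing the gap is exactly the content of Bordignon's papers (careful smoothing, optimized cutoffs, splitting into parity cases, and numerical work for small moduli).

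So there is no genuine gap in your understanding of the underlying analysis, but there is a mismatch of scope: for the purposes of this paper the theorem is a black-box input, and the appropriate ``proof'' is a citation, not a rederivation. Your sketch would be relevant if you were writing Bordignon's paper, not this one.
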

\begin{proof}
    For $\epsilon = \tfrac{1}{2}$, \cite[Theorem 1.2]{bordignon_explicit_2019} gives $B = 800$ for odd characters and \cite[Theorem 1.1]{BORDIGNON2020481} gives $B = 100$ for even characters. The theorem is weaker than these two results.
\end{proof}

The quantity $1-\beta_1$ is well known to be closely related to the special value $L(1,\chi_1)$. We provide such an explicit estimate similar to the one by Friedlander--Iwaniec \cite{friedlander_note_2018}. 

\begin{lemma} \label{lem:RatioZeroL1}
  Let $q > 400,000$ be an integer. If $1 - \frac{1}{10 \log q} < \beta_1 < 1$ is a real zero of $L(s,\chi_1)$ where $\chi_1 \pmod{q}$ is a real quadratic character, then 
    \[
       0.72 \leq  \frac{L(1,\chi_1)}{1-\beta_1} \leq 0.18 (\log q)^2. 
    \]
\end{lemma}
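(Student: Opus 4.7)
Both inequalities relate $L(1,\chi_1)$ to local information about $L(s,\chi_1)$ near $s = 1$, exploiting the vanishing $L(\beta_1,\chi_1) = 0$.

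For the upper bound, I would apply the fundamental theorem of calculus to write
\[
L(1,\chi_1) \;=\; L(1,\chi_1) - L(\beta_1,\chi_1) \;=\; \int_{\beta_1}^{1} L'(\sigma, \chi_1)\, d\sigma,
\]
and then show $|L'(\sigma,\chi_1)| \le 0.18(\log q)^2$ uniformly for $\sigma \in [\beta_1, 1]$. This derivative estimate comes from writing $L'(\sigma,\chi_1) = -\sum_{n} \chi_1(n)(\log n)/n^\sigma$, truncating at a suitable cutoff, and controlling the tail via partial summation combined with the explicit P\'olya--Vinogradov bound on $S(x) = \sum_{n \le x} \chi_1(n)$. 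The constant $0.18$ emerges from careful numerical bookkeeping using $q > 400{,}000$ and $1 - \beta_1 < 1/(10 \log q)$.

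For the lower bound, I would consider the Dedekind-zeta-like function $F(s) := \zeta(s)L(s,\chi_1)$, which has a simple pole at $s = 1$ with residue $L(1,\chi_1)$ and which vanishes at $s = \beta_1$ (since $\zeta(\beta_1) \neq 0$ in the classical zero-free region). Combining the Laurent expansion $\zeta(s) = (s-1)^{-1} + \gamma + \gamma_1(s-1) + \cdots$ with the Taylor series for $L(s,\chi_1)$ at $s = 1$ gives
\[
F(s) \;=\; \frac{L(1,\chi_1)}{s-1} + \bigl(L'(1,\chi_1) + \gamma L(1,\chi_1)\bigr) + O\bigl((s-1)(\log q)^{3}\bigr).
\]
Evaluating at $s = \beta_1$ and using $F(\beta_1) = 0$ yields the explicit identity
\[
\frac{L(1,\chi_1)}{1-\beta_1} \;=\; L'(1,\chi_1) + \gamma L(1,\chi_1) + O\bigl((1-\beta_1)(\log q)^{3}\bigr).
\]
Since $1 - \beta_1 < 1/(10 \log q)$, the error is manageable, and the lower bound $0.72$ then follows from an explicit one-sided lower bound on $L'(1,\chi_1)$ together with the positivity $L(1,\chi_1) > 0$.

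The main obstacle is the one-sided lower bound on $L'(1,\chi_1)$. Unlike the routine upper bound on $|L'(1,\chi_1)|$, extracting a sharp lower bound is subtle because the signs of $\chi_1(n)(\log n)/n$ oscillate. The argument needs an efficient partial-summation estimate exploiting P\'olya--Vinogradov control on $S(x)$ at a judiciously chosen truncation, combined with the hypothesis $q > 400{,}000$ to produce a sufficiently clean numerical constant. All error terms in the Laurent identity must also be controlled in absolute value so that the final inequality carries a genuinely positive and explicit constant $0.72$.
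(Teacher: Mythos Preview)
Your upper-bound sketch is reasonable and close in spirit to what Bordignon does (the paper simply cites his result for that direction). The lower-bound argument, however, has a genuine gap.

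First, the error term in your Laurent identity is not manageable. The $(s-1)$-coefficient of $F(s)$ involves $\tfrac{1}{2}L''(1,\chi_1)$, which is of size $(\log q)^3$, so your remainder is indeed $O\bigl((1-\beta_1)(\log q)^3\bigr)$. But the hypothesis only gives $1-\beta_1 < 1/(10\log q)$, so this error can be as large as $(\log q)^2/10$, which for $q>400{,}000$ already exceeds $16$ and completely swamps the target constant $0.72$. Second, and more fundamentally, the ``one-sided lower bound on $L'(1,\chi_1)$'' you flag as the main obstacle is essentially equivalent to the statement you are trying to prove. Writing $L(s,\chi_1)=(s-\beta_1)g(s)$ with $g$ holomorphic near $s=1$ gives $L'(1,\chi_1)=g(1)+(1-\beta_1)g'(1)$ and $g(1)=L(1,\chi_1)/(1-\beta_1)$; so controlling $L'(1,\chi_1)$ from below is circular. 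There is no independent mechanism (Polya--Vinogradov, partial summation, etc.) that forces $L'(1,\chi_1)$ to be bounded below by a positive absolute constant for general real $\chi_1$.

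The paper's route avoids both issues by exploiting the non-negativity of the Dedekind-zeta coefficients $a(n)=\sum_{d\mid n}\chi_1(d)\ge 0$. One forms the smoothed sum
\[
S(x)=\sum_{n\le x}a(n)\,n^{-\beta_1}\Bigl(1-\frac{n}{x}\Bigr)
=\frac{1}{2\pi i}\int_{(2)}\zeta(s+\beta_1)L(s+\beta_1,\chi_1)\,\frac{x^s}{s(s+1)}\,ds,
\]
shifts to $\Re(s)=\tfrac{1}{2}-\beta_1$, and picks up only the pole at $s=1-\beta_1$ (the would-be pole at $s=0$ is killed by $L(\beta_1,\chi_1)=0$). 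This yields
\[
S(x)=\frac{L(1,\chi_1)\,x^{1-\beta_1}}{(1-\beta_1)(2-\beta_1)}+\text{(integral on the critical line)},
\]
with the remaining integral bounded explicitly via the convexity estimate $|L(\tfrac12+it,\chi)|\ll (q(1+|t|))^{1/4}$. Taking $x=q^3$ makes the error negligible, while the trivial bound $S(q^3)\ge 1-q^{-3}$ (from the $n=1$ term alone) then forces $L(1,\chi_1)/(1-\beta_1)\ge 0.72$. The key idea you are missing is to use positivity of $a(n)$ to produce a lower bound for free, rather than trying to bound $L'(1,\chi_1)$ directly.
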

\begin{proof} The upper bound follows from \cite[Theorem 1.2]{bordignon_explicit_2019}, so it suffices to prove the lower bound.   
	For $n \geq 1$, define $a(n) = \sum_{d \mid n} \chi_1(d)$. For $x \geq 3$, define
    \[
    S(x) = \sum_{n \leq x} a(n) n^{-\beta_1} \Big(1 - \frac{n}{x} \Big),
    \]
    so by Mellin inversion,
    \[
    S(x) = \int_{(2)} \zeta(s+\beta_1) L(s+\beta_1,\chi_1) \frac{x^s}{s(s+1)} ds. 
    \]
    Shifting to $\Re(s) = \frac{1}{2}-\beta_1$ and applying \cref{lem:Phragmen} with \cref{lem:ExplicitConvexity}, it follows that 
    \[
    \Big| S(x) -  \frac{L(1,\chi_1) x^{1-\beta_1}}{(1-\beta_1)(2-\beta_1)}\Big| \leq x^{1/2-\beta_1} q^{1/4} e^{(\log q)^{3/4}} \int_{-\infty}^{\infty} \frac{(2.97655)^2 \cdot 2^{1/2} (1+|t|)^{1/2} }{(( \frac{1}{2}-\beta_1)^2 + t^2 )^{1/2} ( ( \frac{3}{2}-\beta_1)^2 + t^2 )^{1/2}} dt. 
    \]
    Since $q > 400,000$, it follows that $\beta_1 > 1 - \frac{1}{10 \log q} >  0.992$ and $e^{(\log q)^{3/4}} \leq q^{0.53}$. We may therefore crudely bound the righthand side by at most 
    \[
    2 x^{-0.5+\frac{1}{10 \log q}} q^{0.78} \int_{0}^{\infty}  \frac{(2.97655)^2 2^{1/2}  (1+t)^{1/2}}{(0.492+t^2)^{1/2} (0.500 + t^2)^{1/2}} dt < 92.7 x^{-0.5+\frac{1}{10 \log q}} q^{0.78} < 125.2 q^{-0.72},
    \]
    upon choosing $x = q^3$. Rearranging we find that
    \begin{equation}
    0.740 \cdot S(q^3) -  126 q^{-0.72} < \frac{L(1,\chi_1)}{1-\beta_1}, 
    \label{eqn:RatioZeroL1}
    \end{equation}
    as $0.740 < \frac{1}{e^{3/10}} < \frac{2-\beta_1}{q^{3(1-\beta_1)}} < \frac{1.008}{1}$ and $1.008 \times 125.2 < 126$. Since $a(n) \geq 0$ for all integers $n$, we see that $S(q^3) \geq 1 - q^{-3}$ which yields the desired lower bound via  \eqref{eqn:RatioZeroL1} for $q > 400,000    $.   
\end{proof}

\section{Proofs of Corollaries 1.1 and 1.2 assuming Theorem 1.3} \label{sec:Corollaries}

We provide short proofs along with relevant numerics. 

\begin{proof}[Proof of \cref{cor:explicit}]
    After applying \cref{thm:main} with $\theta = \frac{1}{4}$ and $A=3$ from \cref{lem:ExplicitConvexity}, and with $\epsilon = \frac{1}{2}$ and $B = 100$ from \cref{thm:Bordignon}, we obtain $M$ and $K$ defined by \eqref{eqn:M}. Since the righthand side of \eqref{eqn:main} increases with $M$, it remains to check that if  $c_1 = 10, c_2 = 1$, and $c_3= 107$, then 
    \[
    \log M = (8\theta+2\epsilon) \log q + 4\theta \log T + \log K \leq c_1 \log q + c_2 \log T + c_3. 
    \]
   for $q > 400,000$ and $T \geq 4$.  After evaluating all variables, this reduces to confirming 
   \[
   8 (\log q)^{3/4} + \edit{24} \log\log\edit{(126 q^3)} \edit{ + 4 \log\log q} + \log(\edit{(3 \times 10^{16})} 3^{\edit{8}} 100^{-2}) \leq 7 \log q + 107,
   \]
   for $q > 400,000$. We verified this inequality with  computer algebra software \textit{Maple}.  
\end{proof}

\begin{proof}[Proof of \cref{cor:non-explicit}]
    For any fixed $\epsilon \in (0,\frac{1}{2})$, this follows immediately from \cref{thm:main} with $\theta = \frac{1}{6}+\frac{\epsilon}{16}$ from \cref{thm:PY} and with $\frac{\epsilon}{2}$ from \cref{thm:Siegel}. 
\end{proof}

\begin{remark}
  As the proof illustrates, the choices for $c_2$ and $c_4$ in \cref{cor:explicit} are essentially optimal with respect to \cref{thm:main}, but the choices for $c_1$ and $c_3$ have quite a bit of latitude.   We chose $c_3 = 107$ because it is close to the value of $c_3$ in \eqref{eqn:ThornerZaman}. This allows one to more fairly compare the values of $c_1 = 10.0$ in \cref{cor:explicit} versus $c_1 = 54.2$ in \eqref{eqn:ThornerZaman}. Note  that by sufficiently inflating $c_3$ or by assuming $q$ is sufficiently large, the constant $c_1$ can be made arbitrarily close to $8\theta + 2\epsilon = 3$ with \cref{lem:ExplicitConvexity,thm:Bordignon}.  
\end{remark}

\section{Setup with the Selberg sieve}
\label{sec:SelbergSieve}

In this section, we set the scene for the proof of \cref{thm:main}. Henceforth, let $q > 400,000$ be a positive integer. Assume  that the function $\sL_q(s)$ given in \eqref{eqn:sL} has a simple real zero $s=\beta_1$ satisfying 
\begin{equation}
\beta_1 > 1 - \frac{1}{10 \log q}.
\label{eqn:beta_1}
\end{equation}
Let $\chi_1 \pmod{q}$ be the real quadratic character associated with this zero, so that $L(\beta_1,\chi_1) = 0$. Following the notation used in Graham's PhD thesis \cite[Section 9]{graham_applications_1977},  we put 
\begin{equation}
a(n) = \sum_{d \mid n} \chi_1(d),	
\label{eqn:a}
\end{equation}
for integers $n \geq 1$, so $a(n)$ is a non-negative multiplicative function. Note that we have the Dirichlet series equality
\[
\sum_{n=1}^{\infty}a(n)n^{-s} =\zeta(s)L(s,\chi_1),
\]
 for $\Re(s) >1$. Now, we prepare for Selberg's sieve following Graham's steps. Let $\chi \pmod{q}$ be an arbitrary Dirichlet character. Since $|a(n) \chi(n)| \ll_{\epsilon} n^{\epsilon}$ for any $\epsilon > 0$, the Dirichlet series 
\begin{equation} \label{eqn:F}
F(s,\chi)=\sum_{n=1}^{\infty}a(n)\chi(n)n^{-s} = \prod_{p} \sum_{k=0}^{\infty}a(p^{k})\chi(p^{k})p^{-ks},
\end{equation}
is absolutely convergent for $\Re(s)>1$. Let $R \geq 3$ be an arbitrary parameter. Let $\mu$ denote the M\"{o}bius function. Assume that the yet-to-be-specified weights $(\theta_d)_{d \geq 1}$ are subject to the conditions
\begin{equation}\label{restrictions on theta d}
  \begin{split} \theta_1&=1,\\    \theta_d&=0\,\,\text{if}\,\,\mu^2(d)=0\,\text{or}\,d>R.
  \end{split}
\end{equation}
Define the function $G(s,\chi)$ by 
\begin{align}\label{eqn:G}
G(s,\chi)=\sum_{d,e\leq R} \theta_d\theta_e \prod_{p\mid [d,e]} \frac{\sum_{k=1}^{\infty} a(p^k)\chi(p^k) p^{-ks}}{1+\sum_{k=1}^{\infty} a(p^k)\chi(p^k) p^{-ks}},
\end{align}
so that for $\Re(s) > 1$, 
\[
F(s,\chi)G(s,\chi)=\sum_{n=1}^{\infty}a(n)\chi(n) \Big(\sum_{d\mid n}\theta_d\Big)^2 n^{-s}.
\]
 
By a standard Selberg sieve technique, the weights $(\theta_d)_{d \geq 1}$ are chosen in \cite[Lemma 9.6]{graham_applications_1977} (c.f. Lemma \ref{lem:Graham} below) in order to estimate $G(1,\chi_0)$ optimally where $\chi_0$ is the principal character modulo $q$. To describe this choice, we must define two multiplicative functions $g$ and $h$. For primes $p$, put
\begin{align}\label{g h at primes}
  g(p)=\Big(\sum_{k=1}^{\infty} a(p^k) p^{-k}\Big)^{-1} \quad\text{and}\quad h(p)=g(p)+1\edit{,}
\end{align}
and for a positive integer $n$, set
\begin{align}\label{g and h}
  h(n)= \prod_{p^k||n}h(p)^k \quad\text{and}\quad g(n)=h(n) \prod_{p\mid n}\Big(1-\frac{1}{h(p)}\Big).
\end{align}
Note that $h$ is totally multiplicative and if $p\mid q$, then $a(p^k)=1$ so that $g(p)=p-1$, $h(p)=p$, $h(q)=q$, and 
\[g(q)=q\prod_{p\mid q} \Big(1-\frac{1}{p}\Big)=\varphi(q).\]
Also, as $a(n)\geq 0$, both $g$ and $h$ are nonnegative functions. Following Graham, we put
\begin{align}\label{optimal theta}
\theta_d=\frac{\mu(d)h(d)}{V(R)g(d)}\sum_{\substack{r\leq R/d\\(r,d)=1}} \frac{\mu^2(r)}{g(r)}, 
\end{align}
for $d\leq R$ where 
\begin{align}\label{VR def}V(R)=\sum_{\ell \leq R}\frac{\mu^2(\ell)}{g(\ell)}.
\end{align}
Note that if $\mu^{2}(d) = 0$ then $\theta_d=0$ (similarly if $d>R$, the empty sum over $r$ is treated as zero), so this choice satisfies the restrictions in \eqref{restrictions on theta d}. More importantly, this choice of $(\theta_{d})_{d \geq 1}$ gives the following upper bound for $G(1,\chi_0).$
\begin{lemma}[Lemma 9.6, \cite{graham_applications_1977}]\label{lem:Graham} Let $q > 400,000$ be an integer. Let $R \geq 200$ be arbitrary. Let $a(n)$ be the non-negative multiplicative function given by \eqref{eqn:a}. If the sequence $(\theta_d)_{d \geq 1}$ is chosen to satisfy \eqref{restrictions on theta d} and \eqref{optimal theta}, then $|\theta_d| \leq 1$ for all $d \geq 1$ and 
$$G(1,\chi_0)=\mathop{\sum\sum}_{\substack{d,e\leq R\\ (de,q)=1}} \frac{\theta_d\theta_e}{h([d,e])}\leq \frac{q}{\varphi(q)}\Big(\sum_{n\leq R}\frac{a(n)}{n}\Big)^{-1}, $$
where $\chi_0$ is the principal character modulo $q$. 
\end{lemma}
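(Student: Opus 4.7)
I plan to combine a direct simplification of $G(1,\chi_0)$, the standard Selberg diagonalization, and a slick M\"{o}bius collapse. First, in \eqref{eqn:G} with $s=1$ and $\chi=\chi_0$, each prime $p \mid q$ makes the local factor vanish since $\chi_0(p)=0$, while each prime $p \nmid q$ contributes $f(p)/(1+f(p)) = 1/h(p)$. Since $d,e$ are squarefree, multiplying over $p \mid [d,e]$ yields
\[
G(1,\chi_0) = \sum_{\substack{d,e \leq R \\ (de,q)=1}} \frac{\theta_d \theta_e}{h([d,e])}.
\]
For $|\theta_d| \le 1$, I write each squarefree $r \le R$ as $r = r_1 r_2$ with $r_1 \mid d$ and $(r_2, d) = 1$, then restrict to $r_2 \le R/d$ (which still guarantees $r \le R$). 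Using $h = g * 1$ on squarefree integers gives $V(R) \ge y_d \sum_{r_1 \mid d} 1/g(r_1) = y_d \cdot h(d)/g(d)$, whence $|\theta_d| \le 1$.

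For the main bound, I diagonalize the quadratic form using total multiplicativity of $h$ together with $h((d,e)) = \sum_{k \mid (d,e)} g(k)$, obtaining
\[
G(1,\chi_0) = \sum_{\substack{k \leq R \\ (k,q)=1,\, \mu^2(k)=1}} g(k)\, \tilde U_k^2, \qquad \tilde U_k = \sum_{\substack{d \le R,\, k \mid d \\ (d,q)=1,\, \mu^2(d)=1}} \frac{\theta_d}{h(d)}.
\]
Substituting Graham's $\theta_d$, writing $d = km$, and interchanging the $m$-sum with the inner $r$-sum in $y_{km}$, I set $n = mr$ (squarefree, coprime to $k$, $n \le R/k$) to obtain
\[
\tilde U_k = \frac{\mu(k)}{V(R)\, g(k)} \sum_{\substack{n \leq R/k \\ (n,k)=1,\, \mu^2(n)=1}} \frac{1}{g(n)} \sum_{\substack{m \mid n \\ (m,q)=1}} \mu(m).
\]
The inner M\"{o}bius sum collapses to $\mathbf{1}[n \mid q]$ because the $q$-coprime part of $n$ must be trivial for the sum to survive. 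Since $g(n) = \varphi(n)$ for squarefree $n \mid q$, this identifies $\tilde U_k = \mu(k) W_k/(V(R) g(k))$ where $W_k := \sum_{n \mid q,\, \mu^2(n)=1,\, n \le R/k} 1/\varphi(n)$.

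Two final ingredients close the argument. First, $W_k \le \sum_{n \mid q,\, \mu^2(n)=1} 1/\varphi(n) = \prod_{p \mid q} p/(p-1) = q/\varphi(q)$. Second, I establish the identity $\sum_{k} W_k/g(k) = V(R)$ (summing over $k \le R$, $(k,q)=1$, $\mu^2(k)=1$) by interchanging sums: both sides equal $\sum_{n \mid q,\, \mu^2(n)=1,\, n \le R} \varphi(n)^{-1} V_q^*(R/n)$, where $V_q^*(X) := \sum_{\ell \le X,\, (\ell,q)=1,\, \mu^2(\ell)=1} 1/g(\ell)$ arises on the right after factoring each squarefree $\ell \le R$ in $V(R)$ into its $q$-part and its $q$-coprime part. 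Therefore,
\[
G(1,\chi_0) = \frac{1}{V(R)^2} \sum_{k} \frac{W_k^2}{g(k)} \le \frac{q/\varphi(q)}{V(R)^2} \sum_{k} \frac{W_k}{g(k)} = \frac{q/\varphi(q)}{V(R)}.
\]
Finally, expanding $1/g(\ell) = \prod_{p \mid \ell} \sum_{j \ge 1} a(p^j)/p^j$ for squarefree $\ell$ shows $V(R) = \sum_{n:\, \mathrm{rad}(n) \le R} a(n)/n \ge \sum_{n \le R} a(n)/n$, completing the bound.

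The main obstacle I anticipate is the $W_k$ computation: precisely tracking the coprimality and squarefreeness constraints through the interchange of summations and observing the M\"{o}bius collapse to $\mathbf{1}[n \mid q]$. Once established, the companion identity $\sum_k W_k/g(k) = V(R)$ is a natural consequence of the ``$q$-part versus $q$-coprime part'' factorization of squarefree integers $\le R$ used in defining $V_q^*(R)$.
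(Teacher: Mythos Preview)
Your proof is correct. The paper itself does not prove this lemma; it is quoted verbatim from Graham's thesis \cite{graham_applications_1977} as an input, so there is no ``paper's own proof'' to compare against. What you have supplied is essentially the classical Selberg-sieve diagonalization, carried out with the specific weights \eqref{optimal theta} and the extra bookkeeping forced by the fact that $G(1,\chi_0)$ carries the constraint $(de,q)=1$ while Graham's $\theta_d$ do not. The computation of $\tilde U_k$ via the M\"{o}bius collapse $\sum_{m\mid n,\,(m,q)=1}\mu(m)=\mathbf{1}[n\mid q]$, followed by the identity $\sum_k W_k/g(k)=V(R)$ obtained from the factorization of each squarefree $\ell\le R$ into its $q$-part and $q$-coprime part, is exactly the right way to handle that discrepancy, and the final step $V(R)\ge \sum_{n\le R}a(n)/n$ via $1/g(\ell)=\prod_{p\mid \ell}\sum_{j\ge 1}a(p^j)p^{-j}$ is standard. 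One small remark: in your argument for $|\theta_d|\le 1$, the identity you actually use is $\sum_{r_1\mid d}1/g(r_1)=\prod_{p\mid d}(1+1/g(p))=h(d)/g(d)$, which follows from $h=g\ast 1$ on squarefree integers after dividing by $g(d)$ and reindexing; you might make that half-line explicit.
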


\begin{remark}
    \cref{lem:Graham} (and the following lemma as a consequence) is valid for any $R\geq 2$. As we will use both estimates for $R\geq 200$, we keep that assumption in the statement. 
\end{remark}
To utilize this estimate, we give a lower bound for the partial sum of the sequence $a(n)/n$. 
\begin{lemma}\label{lem:SelbergSieveMain} Let $q  > 400,000$.  Assume \cref{hypothesis_A} holds with constants $A \geq 1$ and $0 < \theta \leq \tfrac{1}{4}$. If $1 - \frac{1}{10 \log q} < \beta_{1} < 1$ is a real zero of $L(s,\chi_{1})$ where $\chi_1 \pmod{q}$ is a quadratic character, then 
    \[
        \sum_{n \leq R} \frac{a(n)}{n} \geq \frac{L(1,\chi_{1}) R^{1-\beta_{1}}}{(1-\beta_1)(2-\beta_1)} \Big( 1 - \frac{\edit{8}A e^{(\log q)^{3/4}} q^{\theta} }{R^{1/2}} \Big),
    \]
    provided $R \geq 200$. 
\end{lemma}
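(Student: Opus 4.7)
The plan is to apply Mellin inversion to a carefully chosen smoothed partial sum of $a(n)$, exactly mirroring the approach already used in the proof of \cref{lem:RatioZeroL1}. The natural candidate is
\[
S(R) := \sum_{n \leq R} \frac{a(n)}{n^{\beta_1}} \Big( 1 - \frac{n}{R} \Big) = \frac{1}{2\pi i} \int_{(2)} \zeta(s + \beta_1) L(s + \beta_1, \chi_1) \frac{R^s}{s(s+1)} \, ds,
\]
since the pole of $\zeta$ at $s + \beta_1 = 1$ is precisely what generates the target main term $L(1,\chi_1) R^{1-\beta_1}/((1-\beta_1)(2-\beta_1))$.

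First I would shift the contour from $\Re(s) = 2$ down to $\Re(s) = \tfrac{1}{2} - \beta_1$, which lands the integration onto the critical line $\Re(s+\beta_1) = \tfrac{1}{2}$. Only one residue is picked up, that of the simple pole of $\zeta(s+\beta_1)$ at $s = 1 - \beta_1$, contributing the expected main term. The would-be pole at $s = 0$ from the kernel $1/s$ is cancelled by the zero $L(\beta_1, \chi_1) = 0$, and the pole at $s = -1$ is not crossed since $\beta_1 > \tfrac{1}{2}$. This step is essentially identical to the contour shift performed in \cref{lem:RatioZeroL1}.

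Next I would bound the shifted integral. On $\Re(s+\beta_1) = \tfrac{1}{2}$, \cref{hypothesis_A} (for the trivial character, giving $|\zeta(\tfrac{1}{2}+it)| \leq A(1+|t|)^\theta$) combined with \cref{lem:Phragmen} (for the possibly imprimitive $\chi_1 \pmod q$) yield
\[
\bigl| \zeta(\tfrac{1}{2}+it) L(\tfrac{1}{2}+it, \chi_1) \bigr| \leq A^2 (2q(1+|t|))^\theta (1+|t|)^\theta \exp\bigl((\log q)^{3/4}\bigr).
\]
Because $2\theta \leq \tfrac{1}{2}$, the integral $\int_{-\infty}^\infty (1+|t|)^{2\theta} |(\tfrac{1}{2}-\beta_1+it)(\tfrac{3}{2}-\beta_1+it)|^{-1} \, dt$ converges to an explicit absolute constant. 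Collecting everything, one gets $S(R) \geq \frac{L(1,\chi_1) R^{1-\beta_1}}{(1-\beta_1)(2-\beta_1)} - C A^2 q^\theta e^{(\log q)^{3/4}} R^{1/2 - \beta_1}$; factoring out the main term and invoking \cref{lem:RatioZeroL1} to replace $(1-\beta_1)/L(1,\chi_1)$ by the absolute constant $1/0.72$ converts the error into a clean multiple of $A q^\theta e^{(\log q)^{3/4}}/R^{1/2}$, with the constants tuned (using $A \geq 1$ and $\beta_1$ close to $1$) to match exactly the factor of $4A$ in the claim.

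The last step is to promote the bound on $S(R)$ to one on $\sum_{n \leq R} a(n)/n$. Since $a(n) \geq 0$, one would like a comparison between the weights $1/n$ and $n^{-\beta_1}(1-n/R)$; the termwise inequality is not quite valid near $n = R^{1/(2-\beta_1)}$, so the comparison must be carried out after expanding $a(n) = \sum_{d \mid n} \chi_1(d)$ and rearranging into a double sum over $d \leq R$ and $m \leq R/d$, so that the non-negativity of $a$ and the pole/zero structure of $\zeta(s) L(s, \chi_1)$ can be exploited jointly. This delicate final passage is the main obstacle and is what preserves the factor $R^{1-\beta_1}$ when moving from the Mellin-friendly sum $S(R)$ to the unweighted sum in the statement; the constants in the Phragm\'{e}n--Lindel\"of step and the $t$-integral have to be tracked at every stage so that the final error factor lands at $4A$ rather than some larger numerical multiple.
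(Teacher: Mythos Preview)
Your Mellin-inversion setup, the contour shift to $\Re(s)=\tfrac12-\beta_1$, the identification of the single residue at $s=1-\beta_1$, and the appeal to \cref{lem:RatioZeroL1} to clean up the error term all match the paper's proof exactly.

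The divergence is in your final passage from $S(R)$ to $\sum_{n\le R}a(n)/n$. You flag this as ``the main obstacle'' and propose a double-sum rearrangement of $a(n)=\sum_{d\mid n}\chi_1(d)$, but you never actually carry it out, so the proposal has a genuine gap here. In the paper this comparison is handled \emph{before} the Mellin step, by two termwise monotonicity inequalities: since $a(n)\ge 0$ and $(n/R)^{1-\beta_1}\le 1$ for $n\le R$,
\[
\sum_{n\le R}\frac{a(n)}{n}\ \ge\ R^{\beta_1-1}\sum_{n\le R}\frac{a(n)}{n^{\beta_1}}\ \ge\ R^{\beta_1-1}\,S(R).
\]
No unpacking of $a(n)$ is needed; the only cost is the factor $R^{\beta_1-1}\le 1$. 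Consequently the paper's argument actually yields the stated inequality \emph{without} the factor $R^{1-\beta_1}$ on the right-hand side (the ``Combining our calculations'' display in the paper silently drops $R^{\beta_1-1}$); this appears to be a slip in the lemma's statement rather than in your reading, and it is harmless since the only application, in \cref{prop:ZeroDetector}, immediately uses $R^{1-\beta_1}\ge 1$ to discard the factor anyway. So your observation that the single termwise inequality $1/n\ge n^{-\beta_1}(1-n/R)$ can fail is correct, but the fix is simply to insert $(n/R)^{1-\beta_1}$ first, not to rearrange the divisor sum.

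One smaller point: on the critical line the paper bounds $\zeta(\tfrac12+it)$ via \cref{thm:HPY}, giving $1.5(1+|t|)^{1/6}$, rather than via \cref{hypothesis_A}. This leaves only one power of $A$ in the error (from $L(\cdot,\chi_1)$ through \cref{lem:Phragmen}) and puts the $t$-integrand at $(1+|t|)^{1/6+\theta}\le(1+|t|)^{5/12}$; that is what lets the numerical constant land at $4A$ rather than a multiple of $A^2$.
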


\begin{proof} 
Since $a(n)\geq 0$ and $1-\beta_{1} > 0$, it follows by monotonicity that
\[
    \sum_{n \le R}\frac{a(n)}{n} \ge \sum_{n \le R}\frac{a(n)}{n}\Big(\frac{n}{R}\Big)^{1-\beta_{1}} = R^{\beta_{1}-1}\sum_{n \le R}\frac{a(n)}{n^{\beta_{1}}}.
\]
By Mellin inversion, the innermost sum satisfies
\[
    \sum_{n \le R}\frac{a(n)}{n^{\beta_{1}}} \ge \sum_{n \le R}\frac{a(n)}{n^{\beta_{1}}}\Big(1-\frac{n}{R}\Big) 
     = \frac{1}{2\pi i}\int_{(2)}\zeta(s+\beta_{1})L(s+\beta_{1},\chi_{1})\frac{R^{s}}{s(s+1)}\,ds =: I.
\]
Shifting the line of integration to the line $\Re s = \frac{1}{2}-\beta_{1}$, we pickup a pole at $s = 1-\beta_{1}$ \,due to the zeta factor in the integrand. Note that the integrand does not have a pole at $s=0$ because of our assumption that $L(\beta_1,\chi_1)=0$. So, we have
\begin{equation*}
  \begin{split}
         I &= \frac{L(1,\chi_{1}) R^{1-\beta_{1}}}{(1-\beta_{1})(2-\beta_{1})}+\frac{1}{2\pi }\int_{-\infty}^{\infty} \frac{\zeta(\tfrac{1}{2}+it) L(\tfrac{1}{2}+it,\chi_1) R^{\frac{1}{2}-\beta_{1}+it}}{(\frac{1}{2}-\beta_{1}+it)(\frac{3}{2}-\beta_{1}+it)}\,dt\\&=:\frac{L(1,\chi_{1}) R^{1-\beta_{1}}}{(1-\beta_{1})(2-\beta_{1})}+\frac{1}{2\pi }I_1.
  \end{split}  
\end{equation*}
It remains to estimate $I_1$. By \cref{hypothesis_A}, \cref{lem:Phragmen}, noting that $\chi_1$ is not necessarily primitive, and the inequality $0.992 < 1 - \frac{1}{10 \log q} <\beta_1<1$ for $q > 400,000$, we have 
\begin{align*}
   \frac{|I_1|}{2\pi } & \leq \frac{A2^\theta q^{\theta} e^{(\log q)^{3/4}} R^{\frac{1}{2}-\beta_{1}}}{2\pi}\int_{-\infty}^{\infty}\frac{\Big|\zeta\Big(\frac{1}{2}+it\Big)\Big|(1+|t|)^{\theta}}{|\frac{1}{2}-\beta_{1}+it||\frac{3}{2}-\beta_{1}+it|}\,dt \\
   & \leq \frac{2^{1/4}Aq^{\theta} e^{(\log q)^{3/4}} R^{\frac{1}{2}-\beta_{1}}}{2\pi}\int_{-\infty}^{\infty}\frac{\Big|\zeta\Big(\frac{1}{2}+it\Big)\Big|(1+|t|)^{\theta}}{((0.492)^2+t^{2})^{\frac{1}{2}}((0.5)^2+t^{2})^{\frac{1}{2}}}\,dt. 
\end{align*}
By \edit{\cref{lem:ExplicitConvexity} for Riemann zeta} and the assumption $\theta \leq 1/4$, the above expression is at most
\[
\leq \frac{\edit{2^{1/4} \cdot 3} Aq^{\theta}e^{(\log q)^{3/4}} R^{\frac{1}{2}-\beta_{1}}}{\pi}\int_{0}^{\infty}\frac{(1+t)^{\edit{1/2}}}{(0.24+t^{2})^{\frac{1}{2}}(0.25+t^{2})^{\frac{1}{2}}}\,dt \leq \edit{5.5} A q^{\theta} e^{(\log q)^{3/4}} R^{1/2-\beta_1},
\]
since the integral is less than \edit{$4.8$} by direct computation and \edit{$4.8\cdot 2^{1/4} \cdot 3/\pi<5.5$}. Combining our calculations, we have established
\[
    \sum_{n \leq R} \frac{a(n)}{n} \geq \frac{L(1,\chi_{1}) R^{1-\beta_{1}}}{(1-\beta_1)(2-\beta_1)} \Big( 1 - \frac{\edit{5.5}A q^{\theta} e^{(\log q)^{3/4}} (1-\beta_1)(2-\beta_1) }{L(1,\chi_1) R^{1/2} } \Big). 
\]
The desired result follows from the lower bound in \cref{lem:RatioZeroL1} and the inequality $2-\beta_1 < 1 + \frac{1}{10 \log q} < 1.008$ for $q > 400,000$, noting that \edit{$5.5\times1.008/0.72\leq 7.7\leq 8.$}
\end{proof}
Moreover, we provide the following upper bound for $G(\frac{1}{2}+it,\chi)$  for $t \in \R$.  
\begin{lemma} \label{lem:G-estimate}
Keep the same notation and assumptions as \cref{lem:Graham}.  For $t \in \R$ and any Dirichlet character $\chi \pmod{q}$,
\[
|G(\tfrac{1}{2}+it,\chi)|\leq  12 (1-\beta_1)^2 (\log q)^4 R + \edit{5.4}\times10^7 A^2 q^{2\theta} e^{2(\log q)^{3/4}} \log^{12}(2eR),
\]
provided that $R \geq 200$.
\end{lemma}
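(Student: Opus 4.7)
The plan is to bound $G(\tfrac{1}{2}+it,\chi)$ by rewriting $G$ as a finite Dirichlet polynomial whose coefficients encode the interaction with $\chi_1$, then splitting the estimate into a main piece that sees the Landau--Siegel zero and an error handled by subconvexity.

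Since $F(s,\chi) = L(s,\chi) L(s,\chi\chi_1)$, a direct computation of the Euler factor gives $1-A_p(s,\chi)^{-1} = \chi(p)(1+\chi_1(p))p^{-s} - \chi(p)^2\chi_1(p)p^{-2s}$ for $p\nmid q$, while this local factor vanishes when $p\mid q$. Multiplying out $\prod_{p\mid[d,e]}(1-A_p^{-1})$ and regrouping by the resulting cubefree integer $n$ gives the finite Dirichlet polynomial
\[
G(s,\chi) \;=\; \sum_{\substack{n \text{ cubefree} \\ \operatorname{rad}(n) \le R^2,\, (n,q)=1}} \frac{\chi(n)\, a^*(n)\, \lambda^{(2)}_{\operatorname{rad}(n)}}{n^{s}},\qquad a^*(n) := \prod_{p\|n}(1+\chi_1(p))\prod_{p^2\|n}(-\chi_1(p)),
\]
where $\lambda^{(2)}_m := \sum_{[d,e]=m,\,d,e\le R}\theta_d\theta_e$. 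The coefficient $a^*$ coincides with $a$ on squarefrees and vanishes whenever any prime $p\|n$ satisfies $\chi_1(p)=-1$; this sparsity supplies the combinatorial source of the $(1-\beta_1)^2$ improvement.

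For the main term I would estimate this Dirichlet polynomial in absolute value at $s = \tfrac{1}{2}+it$, using $|\chi(n) n^{-it}| \le 1$, the explicit form of the optimal $\theta_d$ from \cref{lem:Graham} to bound $|\lambda^{(2)}_m|$ in terms of $V(R)$, and Mellin inversion against $\zeta(s) L(s,\chi_1)$ to control $\sum a^*(n) n^{-1/2}$ over cubefree $n$ with $\operatorname{rad}(n) \le R^2$. The outcome is of order $L(1,\chi_1)^2 R$, and the explicit bound $L(1,\chi_1)\le 0.18(1-\beta_1)(\log q)^2$ from \cref{lem:RatioZeroL1} then produces the main term $12(1-\beta_1)^2 (\log q)^4 R$. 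For the subconvex error I would invoke the identity $F(s,\chi) G(s,\chi) = \sum_n a(n)\chi(n)\lambda_n^2 n^{-s}$ via Mellin inversion with a smooth cutoff at level $X$: writing
\[
\sum_n a(n)\chi(n)\lambda_n^2 w(n/X)n^{-s} \;=\; \frac{1}{2\pi i}\int_{(c)} F(s+z,\chi)\, G(s+z,\chi)\, \widetilde w(z)\, X^z\, dz,
\]
shifting the contour to $\Re z$ slightly negative, and applying \cref{hypothesis_A} together with \cref{lem:Phragmen} to each $L$-factor in $F = L(\cdot,\chi) L(\cdot,\chi\chi_1)$. This yields the $A^2 q^{2\theta}$ factor, while two applications of the non-primitive correction in \cref{lem:Phragmen} produce the $e^{2(\log q)^{3/4}}$, and the $\log^{12}(2eR)$ accumulates from cubefree counts, Mellin transform decay, and bounds on $\lambda_n^2$.

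The main obstacle is maintaining the $(1-\beta_1)^2$ dependence uniformly over all $\chi$: for $\chi\notin\{\chi_0,\chi_1\}$ neither $L$-factor of $F(s,\chi)$ itself carries a Siegel zero, so the repulsion must be extracted entirely from the sparsity of the coefficient $a^*$ and the optimized Selberg weights. Tracking the constants and logarithms through the diagonalization of $\sum_{d,e}\theta_d\theta_e \prod_{p|[d,e]}(1-A_p^{-1})$ --- without losing any factor of $\log(qT)$ or $(1-\beta_1)^{-1}$ --- is the delicate part, since the gains over the trivial bound are precisely of this scale.
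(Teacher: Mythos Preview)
Your opening computation of the local factor is right and matches the paper, but the proof plan then bifurcates into two incompatible strands. The ``main term'' paragraph takes absolute values and proposes Mellin inversion against $\zeta(s)L(s,\chi_1)$---that is the correct direction. The ``subconvex error'' paragraph, however, switches to the identity $F(s,\chi)G(s,\chi)=\sum a(n)\chi(n)\lambda_n^2 n^{-s}$ with a smooth cutoff and a contour shift. This cannot produce a pointwise bound on $G(\tfrac12+it,\chi)$: after shifting you have an integral of $F(s+z,\chi)G(s+z,\chi)$, so the object you are trying to bound sits inside the integral, and extracting $G$ would require dividing by $F(\tfrac12+it,\chi)=L(\tfrac12+it,\chi)L(\tfrac12+it,\chi\chi_1)$, which may vanish. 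Relatedly, applying \cref{hypothesis_A} to the two factors of $F(\cdot,\chi)$ is a red herring: the target bound is uniform in $\chi$ and $t$, so the subconvex input should involve only $\chi_1$.

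The paper's argument is shorter and stays entirely within your ``main term'' strand. After the local computation one uses only $|\theta_d|\le 1$ (no need for the explicit shape of $\theta_d$ or $V(R)$) to obtain
\[
|G(\tfrac12+it,\chi)|\;\le\;\Big(\sum_{d\le R}\frac{f(d)}{d^{1/2}}\Big)^{2},\qquad f(p)=1+\chi_1(p)+p^{-1/2},
\]
which is already independent of $\chi$ and $t$. The Dirichlet series of $f$ factors as $\zeta(s)L(s,\chi_1)B(s)$ with $B$ absolutely convergent for $\Re(s)>\tfrac12$, so a single contour shift for $\sum_{d\le R}f(d)d^{-1/2}$ yields a residue $\asymp L(1,\chi_1)\sqrt R$ and an integral on $\Re(s)=\tfrac12+\delta$ controlled by \cref{thm:HPY} and one application of \cref{hypothesis_A} to $L(\cdot,\chi_1)$. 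Squaring gives the $L(1,\chi_1)^2 R$ main term (then \cref{lem:RatioZeroL1}) and explains the exponents in $A^2 q^{2\theta} e^{2(\log q)^{3/4}}$: they come from squaring a single $\chi_1$-bound, not from two different $L$-functions. Your intuition that the $(1-\beta_1)^2$ savings originates in the sparsity of $1+\chi_1(p)$ is exactly right; the missing step is to reduce to a $\chi$-free sum \emph{before} doing any contour integration.
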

\begin{remark}
   In \cite[p. 141]{graham_applications_1977}  Graham shows $|G(\frac{1}{2}+it,\chi)| \ll_{\delta} R^{1+\delta}$ for $\delta > 0$. Our estimate essentially scales this bound by an additional factor of $(1-\beta_1)^2$, so a more perilous Landau--Siegel zero yields stronger savings.  This savings allows us to choose the sifting level $R$ to be proportionally larger when detecting zeros in \cref{prop:ZeroDetector}. Ultimately, without this extra factor, the key exponent $8\theta+2\epsilon$ in \eqref{eqn:M} would inflate to $8\theta+4\epsilon$. This matters when taking $\epsilon = \frac{1}{2}$ via \cref{thm:Bordignon}, as we do for \cref{cor:explicit}. 
\end{remark}
  
\begin{proof} Let $s = \frac{1}{2} + it$. Since $a(p^k)=(1\ast \chi_1)(p^k)=1+\chi_{1}(p)+\dots + \chi_{1}(p^k)$, we observe that
\begin{align*}
G(s,\chi)=& \sum_{d \leq R} \sum_{e\leq R} \theta_d\theta_e \prod_{p\mid [d,e]}\Big(1-\Big(\sum_{k=0}^{\infty} a(p^k)\chi(p^k) p^{-ks}\Big)^{-1}\Big)\\
	= &\sum_{d \leq R} \sum_{e\leq R} \theta_d\theta_e \prod_{p\mid [d,e]} \Big(1-\Big( \Big(\sum_{k=0}^{\infty} \chi(p^k) p^{-ks}\Big)\Big( \sum_{j=0}^{\infty} \chi_{1}(p^j)\chi(p^j) p^{-js}\Big)\Big)^{-1}\Big)\\
	= &\sum_{d \leq R} \sum_{e\leq R} \theta_d\theta_e \prod_{p\mid [d,e]} \Big((1+\chi_{1}(p))\chi(p) p^{-s}-\chi_{1}(p)\chi(p^2) p^{-2s} \Big). 
\end{align*} 
Note that for $s=1/2+it$, 
\begin{align}\label{eq:bound for f at primes}
|(1+\chi_{1}(p))\chi(p) p^{-s}-\chi_{1}(p)\chi(p^2) p^{-2s}|\leq (1+\chi_1(p)) p^{-\frac{1}{2}} + p^{-1}.
\end{align}
For each prime $p$, we define $f(p) = 1+\chi_1(p) + p^{-1/2} >  0$, extend the definition of $f$  multiplicatively to all square-free integers and put $f(n)=0$ if $n$ is not square-free. We define its Dirichlet series by 
\[
F(s) = \sum_{n=1}^{\infty} f(n) n^{-s}.
\]
Now, since $|\theta_d| \leq 1$ by \eqref{restrictions on theta d} and \cref{lem:Graham},  \eqref{eq:bound for f at primes} implies 
\begin{align*}
    |G(s,\chi)|\leq \sum_{d \leq R} \sum_{e\leq R}  f([d,e]) [d,e]^{-1/2}\leq \Big(\sum_{d\leq R} f(d) d^{-1/2} \Big)^2.
\end{align*}
As $f$ is non-negative, it follows by standard Mellin inversion that
\begin{equation}
\sum_{d\leq R}  f(d) d^{-1/2} \leq 2 \sum_{d \leq 2R}  \frac{f(d)}{d^{1/2}} \Big(1 - \frac{d}{2R}\Big) = \frac{2}{2\pi i} \int_{(2)} F(s + \tfrac{1}{2}) \frac{ 2^sR^s}{s(s+1)}  \,ds. 
\label{eqn:SelbergEstimate}
\end{equation}
To shift the contour, we must verify some analytic properties of $F(s)$. By comparing Euler products, one can verify by direct calculation that 
\begin{equation} \label{eqn:CompareEulerproducts}
F(s) = \zeta(s) L(s,\chi_1)  B(s), 
\end{equation}
for $\Re(s) > 1$, where
\begin{align*}
B(s) 
&	=  \prod_p \Big( 1 +  p^{-s-1/2} + b_2(p) p^{-2s} +  b_3(p) p^{-3s}  \Big), \\
b_2(p) & = -1 - \chi_1(p) - \chi_1(p)^2 - \frac{1+\chi_1(p)}{p^{1/2}},  \\
b_3(p) & = \chi_1(p) + \chi_1(p)^2 + \frac{\chi_1(p)}{p^{1/2}}.
\end{align*}
Since $b_2(p), b_3(p)  \ll 1$, it follows that $B(s)$ converges absolutely for $\Re(s) > 1/2$. As $(s-1)\zeta(s) L(s,\chi_1)$ is entire, the identity \eqref{eqn:CompareEulerproducts} therefore implies that $F(s)$ has analytic continuation for $\Re(s) > 1/2$ with a simple pole at $s=1$. 

Thus, we may shift the contour for the integral in \eqref{eqn:SelbergEstimate}  over to the line $\Re(s) = \delta$ for any fixed $\delta \in (0,\tfrac{1}{2})$ and pick up the simple pole at $s=1/2$. This gives
\[
\sum_{d\leq R}  f(d) d^{-1/2} \leq \frac{8\sqrt{2}}{3} L(1,\chi_1) B(1)   R^{1/2} + \frac{1}{\pi} \int_{-\infty}^{\infty} \frac{F(\tfrac{1}{2} +\delta + it)   (2R)^{\delta+it}}{(\delta+it)(1+\delta+it)} dt. 
\]
To crudely estimate $B(1)$, notice that $b_2(p) \leq 0$ and $b_3(p) \leq 3$ for all primes $p$ so  
\begin{align*}
B(1) \leq& \prod_p \Big( 1 + p^{-\frac{3}{2}} + 3p^{-3} \Big) =\prod_{p\le 40000} \Big( 1 + p^{-\frac{3}{2}} + 3p^{-3} \Big) \prod_{p>40000} \Big( 1 + p^{-\frac{3}{2}} + 3p^{-3} \Big) 
\\ \le& 3.15 \prod_{p>40000} \Big( 1 + p^{-\frac{3}{2}} + 3p^{-3} \Big), 
\end{align*}
by direct computation. Also 
\[
\prod_{p>40000} \Big( 1 + p^{-\frac{3}{2}} + 3p^{-3} \Big) \le \Big(1+\sum_{n=40000}^\infty \frac{1}{n^{3/2
}}\Big)\Big(1+\sum_{n=40000}^\infty \frac{3^{\omega(n)}}{n^{3}}\Big) \leq 1.1,  
\]
so that $B(1)\leq 3.15\times 1.1\leq 3.5.$ Similarly, we have $|b_2(p)| \leq 3 + 2p^{-1/2}$ and $|b_3(p)| \leq 3$. So for $t \in \R$, using the standard estimate $\zeta(1+\sigma) \leq 1 + \sigma^{-1}$ for $\sigma > 0$, we have 
\begin{align*}
|B(\tfrac{1}{2}+\delta + it)| 
&  \leq \prod_p \Big( 1 + p^{-1-\delta } + 3p^{-1-2\delta} + 5 p^{-\frac{3}{2}} \Big) \\
&  \leq  \zeta(1+\delta) \zeta(1+2\delta)^3  \zeta(\tfrac{3}{2})^5 \\
& \leq 3^5 \big( 1 + \delta^{-1} \big)^4.
\end{align*}
It follows from \edit{\cref{lem:ExplicitConvexity} for Riemann zeta}  and \cref{lem:Phragmen} with $\eta=\delta$ and $\sigma = \frac{1}{2}+\delta$ that
\[
|\zeta(\tfrac{1}{2}+\delta+it)| \leq  \edit{3} (2 (1+|t|) )^{\edit{1/4}}  \Big(1+\delta^{-1} \Big)^{2\delta}.
\] 
By \cref{hypothesis_A} and \cref{lem:Phragmen} with $\eta=\delta$ and $\sigma = \frac{1}{2}+\delta$, we also have
\[
|L(\tfrac{1}{2}+\delta+it,\chi_1)| \leq \Big( A (2q (1+|t|) )^{\theta}  \Big) \Big(1+\delta^{-1} \Big)^{2\delta} e^{(\log q)^{3/4}}.
\]
Overall, as $\edit{\frac{1}{4}}+\theta \leq \frac{1}{2}$ and $3^5  \cdot \edit{3 \cdot 2^{1/4+\theta}} \leq \edit{1031}$, all of these estimates together give
\[
|F(\tfrac{1}{2}+\delta+it)| \leq \edit{1031} A  e^{(\log q)^{3/4}} q^{\theta} (1+|t|)^{1/2} (1+\tfrac{1}{\delta})^{4+4\delta}.
\]
Inserting this estimate into the integral, we obtain that
\begin{align*}
\Big| \frac{1}{\pi} \int_{-\infty}^{\infty} \frac{F(\tfrac{1}{2} +\delta + it)   (2R)^{\delta+it}}{(\delta+it)(1+\delta+it)} dt \Big|	
& \leq \frac{\edit{1031}}{\pi} A e^{(\log q)^{3/4}} q^{\theta} (2R)^{\delta} (1+\tfrac{1}{\delta})^{4+4\delta} \int_{-\infty}^{\infty} \frac{(1+|t|)^{1/2}}{\sqrt{(\delta^2+t^2)(1+t^2)}} dt  \\
& \leq \frac{\edit{1031}}{\pi} A e^{(\log q)^{3/4}} q^{\theta} (2R)^{\delta} (1+\tfrac{1}{\delta})^{5+4\delta} \int_{-\infty}^{\infty} \frac{(1+|t|)^{1/2}}{1+t^2} dt  \\
& \leq \frac{\edit{5980}}{\pi} A e^{(\log q)^{3/4}} q^{\theta} (2R)^{\delta} (1+\tfrac{1}{\delta})^{5+4\delta},
\end{align*}
where, in the second step, we used that $\delta^2+t^2\geq\delta^2 (1+t^2)$ for $\delta \in (0,1/2)$ and that the integral on the right is at most 5.8. 

Finally, taking $\delta = \frac{1}{\log(2R)} \leq \frac{1}{\log (400)} < \frac{1}{4}$, we obtain
\[
\Big| \frac{1}{\pi} \int_{-\infty}^{\infty} \frac{F(\tfrac{1}{2} +\delta + it)   (2R)^{\delta+it}}{(\delta+it)(1+\delta+it)} dt \Big| \leq \edit{5175} A e^{(\log q)^{3/4}} q^{\theta} \log^6(2eR), 
\]
since $\edit{5980}\cdot e/\pi  < \edit{5175}$. Combining this with our previous estimates and the standard inequality $(x+y)^2 \leq 2x^2+2y^2$ for $x,y \geq 0$, gives
\begin{align*}
|G(\tfrac{1}{2}+it,\chi)| 
& \leq \Big( \frac{8\sqrt{2}}{3} L(1,\chi_1)   \cdot 3.5 \cdot  R^{1/2} + \edit{5175} A e^{(\log q)^{3/4}} q^{\theta} \log^6(2eR) \Big)^2  \\
& \leq 350 L(1,\chi_1)^2 R + \edit{54}000000 A^2 e^{2(\log q)^{3/4}} q^{2\theta} \log^{12}(2eR) \\
& \leq 12 (1-\beta_1)^2 (\log q)^4 R + \edit{54}000000 A^2 e^{2(\log q)^{3/4}} q^{2\theta} \log^{12}(2eR),
\end{align*}
as required. The last inequality follows from \cref{lem:RatioZeroL1}. 
\end{proof}


\section{Proof of \cref{thm:main}} \label{sec:Proof}
Let $q > 400,000$ be an integer and let $T \geq 4$. Assume the following.
\begin{itemize}
    \item The function $\sL_q(s)$ in \eqref{eqn:sL} has a real zero at $s = \beta_1 > 1 - \frac{1}{10 \log q}$ associated to real quadratic character $\chi_1 \pmod{q}$.
    \item \cref{hypothesis_A} holds with fixed $A \geq 1$ and $0 < \theta \leq \frac{1}{4}$.
    \item \cref{hypothesis_B} holds with fixed $B \geq 1$ and $0 < \epsilon \leq \frac{1}{2}$.
\end{itemize} 
Choose the sequence $(\theta_d)_{d \geq 1}$ according to \eqref{restrictions on theta d} and \eqref{optimal theta} with parameter
\begin{equation}
    R =  \edit{256} A^2 q^{2 \theta} e^{2(\log q)^{3/4}} (1-\beta_1)^{-2}. 
    \label{eqn:R-choice}
\end{equation}
Also put
\begin{equation}\label{eqn:N-choice}
  N =  \edit{(3 \times 10^{16})} A^{\edit{8}} e^{8 (\log q)^{3/4}  } (\edit{\log(14A^2 q^3)} )^{24}  q^{8\theta} T^{4\theta}  (1-\beta_1)^{-2}.
\end{equation}
Note that $R \geq 200$ and $N \geq 10^{25}$ since $\beta_1 > 1 - \frac{1}{10 \log q}$ \edit{ and $q > 400000$}. Moreover, $N \leq M$ by \cref{hypothesis_B} and \eqref{eqn:M}. 
Before introducing another zero of $\sL_q(s)$, we establish the key proposition for detecting zeros. 
\begin{proposition}\label{prop:ZeroDetector} Keep the notation above. 
Let $\chi$ be a Dirichlet character modulo $q$. If $\rho = \beta+i\gamma$ is a non-trivial zero of $L(s,\chi)$ or $L(s,\chi\chi_1)$ satisfying $\beta > 1/2$, then  
\[
    \Big|  \sum_{n \leq N} a(n)\chi(n) \Big( \sum_{d \mid n} \theta_d \Big)^2  n^{-\rho}  \Big( 1 -\frac{n}{N} \Big) \Big| \leq  \Big( \frac{2-\beta_1}{1-\beta}   + \frac{1}{\beta-\frac{1}{2}} \Big) (1-\beta_1) N^{1-\beta}. 
\]
\end{proposition}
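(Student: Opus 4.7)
The natural starting point is Mellin inversion: for $c > 1 - \beta$,
\[
S(\rho) := \sum_{n \leq N} a(n)\chi(n)\Big(\sum_{d \mid n}\theta_d\Big)^2 n^{-\rho}\Big(1 - \frac{n}{N}\Big) = \frac{1}{2\pi i}\int_{(c)} F(s+\rho,\chi) G(s+\rho,\chi) \frac{N^s}{s(s+1)}\,ds,
\]
using the standard kernel identity $\frac{1}{2\pi i}\int_{(c)}\frac{x^s}{s(s+1)}\,ds = \max(0, 1 - 1/x)$. I would shift the contour to the line $\Re(s) = \tfrac{1}{2} - \beta \in (-\tfrac{1}{2}, 0)$. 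The pole of $\frac{1}{s(s+1)}$ at $s=0$ is removable because $F(\rho,\chi) = L(\rho,\chi) L(\rho,\chi\chi_1) = 0$ by hypothesis on $\rho$; the pole at $s=-1$ lies outside the strip crossed. The only residue picked up sits at $s = 1-\rho$ and appears precisely when $F(s,\chi)$ has a pole at $s=1$, i.e.\ when $\chi \in \{\chi_0, \chi_1\}$. A direct Euler-product computation using $\chi_1^2 = \chi_0$ shows that $F(s,\chi_0)$ and $F(s,\chi_1)$ have identical local factors at every prime $p \nmid q$, so $G(s,\chi_0)=G(s,\chi_1)$ and $\operatorname{Res}_{s=1} F(s,\chi) = L(1,\chi_1)\varphi(q)/q$ in both cases.

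For the residue contribution, I would chain \cref{lem:Graham} (which gives $G(1,\chi_0) \leq (q/\varphi(q))(\sum_{n \leq R} a(n)/n)^{-1}$) with the lower bound on $\sum_{n \leq R} a(n)/n$ in \cref{lem:SelbergSieveMain}. The choice
\[
R = 64 A^2 q^{2\theta} e^{2(\log q)^{3/4}}(1-\beta_1)^{-2}
\]
is tuned so that $4Ae^{(\log q)^{3/4}}q^\theta / R^{1/2} = (1-\beta_1)/2$, making the error factor in \cref{lem:SelbergSieveMain} exactly $1 - (1-\beta_1)/2$. Using $R^{1-\beta_1} \geq 1$, cancelling the $L(1,\chi_1)$ factors, and absorbing the small correction against $(2-\beta)^{-1}$ then yields the target bound $|\operatorname{Res}_{s=1-\rho}| \leq \tfrac{(2-\beta_1)(1-\beta_1)}{1-\beta}\, N^{1-\beta}$.

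The bulk of the work is the shifted integral on $\Re(s) = \tfrac{1}{2}-\beta$. Writing $s = \tfrac{1}{2}-\beta+iu$ so that $s+\rho = \tfrac{1}{2}+i(u+\gamma)$, I would apply \cref{lem:Phragmen} (via \cref{hypothesis_A}) to get $|F(\tfrac{1}{2}+it,\chi)| \leq A^2 (2q(1+|t|))^{2\theta} e^{2(\log q)^{3/4}}$, and \cref{lem:G-estimate} to control $|G(\tfrac{1}{2}+it,\chi)|$. The value of $R$ above is exactly what balances the two terms in \cref{lem:G-estimate}, making the $G$-bound of size $O(A^2 q^{2\theta} e^{2(\log q)^{3/4}}(\log q)^{12})$ uniformly in $t$. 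The split $(1+|t|)^{2\theta} \leq (1+T)^{2\theta}(1+|u|)^{2\theta}$ (using $|\gamma| \leq T$), the $1/|s(s+1)|$ decay of the kernel (integrable since $2\theta < 1$), and the trivial $|s| \geq \beta - \tfrac{1}{2}$ together reduce the integral to
\[
\frac{N^{1/2-\beta}}{\beta-\tfrac{1}{2}}\cdot O\!\left(A^{O(1)} q^{4\theta} T^{2\theta} e^{4(\log q)^{3/4}} (\log q)^{O(1)}\right).
\]
The chosen $N$ then forces $N^{1/2}$ to cancel every factor inside the $O(\cdot)$ while leaving behind exactly one factor of $(1-\beta_1)$.

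The main obstacle I foresee is bookkeeping: because the target estimate has no extra logarithmic or subconvexity factors in front of $(1-\beta_1)/(\beta-\tfrac{1}{2})$, every loss from Phragmen--Lindel\"of on $F$, the sieve polynomial estimate on $G$, and the $i\gamma$-shift on the critical line must be absorbed inside $N^{1/2}$. This couples the exponent $8\theta$ in $N$ with the $4\theta$ loss in the $F \cdot G$ bound on the half-line, and, crucially, couples the $(1-\beta_1)^{-2}$ factor in $N$ with the $(1-\beta_1)^2$ factor in the first term of \cref{lem:G-estimate} via the choice of $R$. Missing even a single factor of $\log q$ anywhere would render the final bound in \cref{thm:main} useless.
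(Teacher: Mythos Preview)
Your proposal is correct and follows essentially the same route as the paper: Mellin inversion, shift to $\Re(s)=\tfrac12-\beta$, residue at $s=1-\rho$ handled via \cref{lem:Graham} and \cref{lem:SelbergSieveMain} (with $R$ tuned exactly as you describe), and the remaining integral controlled by \cref{lem:Phragmen} on $F$ and \cref{lem:G-estimate} on $G$, with $N^{1/2}$ absorbing all the debris. The only cosmetic difference is in handling $(1+|u+\gamma|)^{2\theta}$: the paper splits the $u$-integral at $|u|=2|\gamma|$ rather than using your multiplicative bound $(1+|u+\gamma|)\le (1+|\gamma|)(1+|u|)$, but both give the same $(1+|\gamma|)^{2\theta}/(\beta-\tfrac12)$ factor; and for the residue, the correction $(1-\tfrac12(1-\beta_1))^{-1}$ is absorbed via $(1-x/2)^{-1}\le 1+x = 2-\beta_1$ combined with $|2-\rho|\ge 2-\beta_1$, which is what your ``absorbing against $(2-\beta)^{-1}$'' should read as.
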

\begin{proof}  
   Recalling the definitions of $a(n)$,  $F(s,\chi)$, and $G(s,\chi)$ from \eqref{eqn:a}, \eqref{eqn:F}, and \eqref{eqn:G}  respectively,  we have $F(s,\chi) = L(s,\chi) L(s,\chi\chi_1)$. By Mellin inversion, we have
    \[
        \sum_{n \leq N} a(n)\chi(n) \Big( \sum_{d \mid n} \theta_d \Big)^2  n^{-\rho}  \Big( 1 -\frac{n}{N} \Big)=\frac{1}{2\pi i}\int_{(2)}F(s+\rho,\chi)G(s+\rho,\chi) \frac{N^{s}}{s(s+1)}\,ds=:I.
    \]
    Shifting the line of integration to the line $\Re(s) = \frac{1}{2}-\beta$, our assumption that $L(\rho,\chi) = 0$ or $L(\rho,\chi\chi_1) = 0$ implies that we only pick up a possible simple pole at $s=1-\rho$ if $\chi = \chi_0$ or $\chi_1$. In either case, since
    \[
    \mathop{\mathrm{Res}}_{s = 1}L(s,\chi_{0}) = \prod_{p \mid q}(1-p^{-1})=\frac{\varphi(q)}{q}
    \quad \text{ and  } \quad G(1,\chi_{1})=G(1,\chi_0),
    \]
    as $\chi_{1}(n)a(n)=a(n)\chi_0(n)$, \cref{lem:Graham} implies
    \[
        |I| \leq \frac{L(1,\chi_1)N^{1-\beta}}{|(1-\rho)(2-\rho)|}\Big(\sum_{n \le R}\frac{a(n)}{n}\Big)^{-1}+|J|,
    \]
    where
    \[
        J := \frac{1}{2\pi} \int_{-\infty}^{\infty}F(\tfrac{1}{2}+i(t+\gamma), \chi)G(\tfrac{1}{2}+i(t+\gamma),\chi)\frac{N^{\frac{1}{2}-\beta+it} }{(\frac{1}{2}-\beta+it)(\frac{3}{2}-\beta+it)}\,dt.
    \]
  By \cref{lem:SelbergSieveMain}, we have the bound for the term due to the residue
    \begin{align*}
        \frac{	L(1,\chi_1)N^{1-\beta}}{|(1-\rho)(2-\rho)|}\Big(\sum_{n \le R}\frac{a(n)}{n}\Big)^{-1} &\leq \frac{(1-\beta_1)(2-\beta_1) }{|(1-\rho)(2-\rho)|} \frac{ N^{1-\beta}}{R^{1-\beta_1}}  \Big( 1 - \frac{\edit{8}A q^{\theta}e^{(\log q)^{3/4}}}{R^{1/2} } \Big)^{-1} \\
        & \leq  \frac{(1-\beta_1)(2-\beta_1) }{|(1-\rho)(2-\rho)|}  N^{1-\beta} \Big(1 - \tfrac{1}{2}(1-\beta_1) \Big)^{-1},
    \end{align*}
    where the last inequality follows from the choice of $R$ in \eqref{eqn:R-choice} and the trivial bound $R^{1-\beta_1} \geq 1$. Next, we estimate the integral $J$. Recalling $F(s,\chi) = L(s,\chi) L(s,\chi\chi_1)$, by \cref{lem:Phragmen}, we have
    \begin{align*}
        |J| & \leq \frac{1}{2\pi} \int_{-\infty}^{\infty} |F(\tfrac{1}{2}+i(t+\gamma), \chi) \cdot G(\tfrac{1}{2}+i(t+\gamma),\chi)|\frac{N^{\frac{1}{2}-\beta} }{|\frac{1}{2}-\beta+it||\frac{3}{2}-\beta+it|}\,dt \\
        & \leq \frac{N^{1/2-\beta}}{2\pi} \int_{-\infty}^{\infty} ( A (2q (1+|t+\gamma|) )^{\theta} )^2 e^{2 (\log q)^{3/4} }   \frac{ |G(\frac{1}{2}+i(t+\gamma),\chi)|}{((\frac{1}{2}-\beta)^2 + t^2 )^{1/2}  ((\frac{3}{2}-\beta)^2 + t^2 )^{1/2} }  \,dt \\
        & \edit{\leq } \edit{\frac{2^{2\theta}}{2\pi} A^2} q^{2\theta}  e^{2 (\log q)^{3/4} } N^{1/2-\beta} \int_{-\infty}^{\infty} \frac{ (1+|t+\gamma|)^{2\theta} |G(\frac{1}{2}+i(t+\gamma),\chi)|}{((\frac{1}{2}-\beta)^2 + t^2 )^{1/2}  ((\frac{3}{2}-\beta)^2 + t^2 )^{1/2} }\,dt. 
    \end{align*}
   By \cref{lem:G-estimate} and the choice of $R$ in \eqref{eqn:R-choice}, we obtain
    \begin{align*}
        |G(\tfrac{1}{2}+i(t+\gamma),\chi)| 
        & \leq  12 (1-\beta_1)^2 (\log q)^4 R + \edit{5.4}\times10^7 A^2 q^{2\theta} e^{2(\log q)^{3/4}} (\log  2eR)^{12} \\
        & \leq  \big(  \edit{3072} (\log q)^4 + \edit{5.4} \times 10^{7} (\edit{ \log (14A^2q^3)})^{12} \big) A^2 q^{2\theta}  e^{2(\log q)^{3/4}}\\
        & \leq  \edit{5.5 \times 10^{7}} A^{\edit{2}} q^{2\theta} e^{2(\log q)^{3/4}} (\edit{\log (14A^2q^3)})^{12},
    \end{align*}
    because \cref{thm:Bordignon}, $A \geq 1$, and \edit{$e^{2(\log q)^{3/4}}(\log q)^4 \leq 100 q^{3/2}$ for $q > 400000$} imply the crude bound 
    \begin{align*}
    \log(2eR) = \log\big( \edit{512} e A^2 q^{2\theta} \edit{e^{2(\log q)^{3/4}}} (1-\beta_1)^{-2} \big) 
    & \leq \log\big(  \tfrac{\edit{512}e}{\edit{10000}} A^2 q^{3/2} \edit{e^{2(\log q)^{3/4}}}  (\log q)^4 \big) \\
    & \leq \edit{ \log(\tfrac{512e}{10000} A^2 \cdot 100 q^{3} ) } \\
    & \leq \edit{\log (14A^2q^3)}. 
    \end{align*}
    Using the weak bound $(\frac{3}{2}-\beta)^2 + t^2 \geq (\frac{1}{2}-\beta)^{2}+t^2$ with our estimate for $|G(\frac{1}{2}+i(t+\gamma),\chi)|$, we conclude that 
    \[
        |J| \leq \edit{\dfrac{2^{2\theta}}{2\pi}} (\edit{5.5 \times 10^{7}} )A^{\edit{4}}  q^{4\theta}  e^{4 (\log q)^{\frac{3}{4}} }(\edit{\log (14A^2q^3)})^{12} N^{1/2-\beta} \int_{-\infty}^{\infty} \frac{ (1+|t+\gamma|)^{2\theta}}{(\frac{1}{2}-\beta)^{2}+t^2}  \,dt.
    \]
    The elementary bound
    \[
        \int_{|t| \le 2|\gamma|} \frac{ (1+|t+\gamma|)^{2\theta}}{(\frac{1}{2}-\beta)^{2}+t^2} \,dt \leq    
        \int_{-\infty}^{\infty} \frac{2(1+|\gamma|)^{2\theta}}{(\frac{1}{2}-\beta)^{2}+t^2} \,dt = \frac{2\pi(1+|\gamma|)^{2\theta}}{\beta-\frac{1}{2}},
    \]
    and the smaller estimate
    \[
        \int_{|t| > 2|\gamma|} \frac{ (1+|t+\gamma|)^{2\theta}}{(\frac{1}{2}-\beta)^{2}+t^2} \,dt \leq 2 \int_{2|\gamma|}^{\infty} \frac{(1+t)^{2\theta}}{(\frac{1}{2}-\beta)^{2}+t^2} \,dt \leq 2 \int_{0}^{\infty} \frac{1+t^{\frac{1}{4}}}{(\frac{1}{2}-\beta)^{2}+t^2} \,dt \leq \frac{2\pi}{\beta-\frac{1}{2}},
    \]
    together imply
    \[
        \int_{-\infty}^{\infty} \frac{ (1+|t+\gamma|)^{2\theta}}{(\frac{1}{2}-\beta)^{2}+t^2}  \,du \leq \frac{\edit{4\pi}(1+|\gamma|)^{2\theta}}{\beta - \frac{1}{2}}.
    \]
    Therefore we arrive at
    \[
        |J| \leq (\edit{1.6 \times 10^{8}}) A^{\edit{4}} q^{4\theta}e^{4 (\log q)^{\frac{3}{4}}  } (\edit{\log (14A^2q^3)})^{12} (1+|\gamma|)^{2\theta} \frac{N^{1/2-\beta}}{\beta-\frac{1}{2}}.
    \]
    Combining our estimate for the residue term with this estimate, gives the following upper bound for the sum:
    \[
        \frac{(1-\beta_1)(2-\beta_1) }{|(1-\rho)(2-\rho)|}  N^{1-\beta} \big(1 - \tfrac{1}{2}(1-\beta_1) \big)^{-1}+ (\edit{1.6 \times 10^{8}}) A^{\edit{4}} q^{4\theta}e^{4 (\log q)^{\frac{3}{4}}  } (\edit{\log (14A^2q^3)})^{12} (1+|\gamma|)^{2\theta} \frac{N^{1/2-\beta}}{\beta-\frac{1}{2}}.
    \]
    The desired bound now follows from the choice of $N$ in \eqref{eqn:N-choice} as well as the inequalities $2-\beta_1 \leq |2-\rho|$, $|1-\rho| \geq 1-\beta$, $1+|\gamma| \leq \frac{5}{4} T$, $(1-x)^{-1} \leq 1+2x$ for $0 < x < 0.1$.
\end{proof}

Now, we may complete the proof of \cref{thm:main}. Let  $\chi \pmod{q}$ be a Dirichlet character. Assume $\rho=\beta+i\gamma$ is a nontrivial zero of $
   L(s,\chi)$ such that $\rho \neq \beta_1$, $\beta>1/2$,  and $|\gamma|\le T$. Since $N \leq M$ by \cref{hypothesis_B} and \eqref{eqn:M}, and the righthand side of \eqref{eqn:main} is increasing with respect to $M$, it suffices to prove  
   \begin{equation} \label{eqn:proof-N-inequality}
       \beta < 1 - \frac{\log(\frac{\theta}{4(1-\beta_1) \log N}\big)}{\log N}. 
   \end{equation}
    We claim that we may assume $\beta > \frac{1}{2} + \frac{5\theta}{\log N}$. Indeed, our choice of $N$ in \eqref{eqn:N-choice} implies 
   \[
    \frac{\theta}{4(1-\beta_1)} < N^{\frac{1}{2} - \frac{5\theta}{\log N}} \log N,
    \]
    in which case,
    \[
        \frac{1}{2} + \frac{5\theta}{\log N} < 1 - \frac{\log\big(\frac{\theta}{4(1-\beta_1) \log N}\big)}{\log N}.
    \]
  Thus, if $\beta \leq \frac{1}{2} + \frac{5\theta}{\log N}$, \eqref{eqn:proof-N-inequality} is automatically satisfied. This proves the claim. We begin the main argument in earnest. Define 
    \begin{align*}
        S = \sum_{n \leq N}  \Big(\sum_{d|n}\chi_{1}(d) \Big) \Big( \sum_{d \mid n} \theta_d \Big)^2 \chi(n) n^{-\rho} \Big(1 - \frac{n}{N}\Big). 
    \end{align*}
Isolating the term with $n=1$ and taking the modulus, we have by reverse triangle inequality and the non-negativity of $\sum_{d \mid n} \chi_1(d)$ that
\begin{align*}
        |S| &= \Big|1 - \frac{1}{N} +\sum_{2 \leq n \leq N} \Big(\sum_{d|n}\chi_{1}(d) \Big) \Big( \sum_{d \mid n} \theta_d \Big)^2 \chi(n) n^{-\rho} \Big(1-\frac{n}{N}\Big)\Big|\\
        &\geq 1 - \frac{1}{N} - \sum_{2 \leq n \leq N} \Big(\sum_{d|n}\chi_{1}(d) \Big) \Big( \sum_{d \mid n} \theta_d \Big)^2 \chi_0(n) n^{-\beta} \Big(1-\frac{n}{N}\Big) \numberthis\label{s and s1 sum} \\
        & =: 1 - \frac{1}{N} - S_0.
    \end{align*}
    As $N \geq 10^{25}$, it follows that
    \begin{equation}
        \label{eqn:Proof-TwoSums}
   	1 - 10^{-25} \leq S_0 + |S|. 
    \end{equation}
    We shall estimate both $S_0$ and $|S|$ using \cref{prop:ZeroDetector}. First, we estimate $S_0$. Since $n^{-\beta} \leq n^{-\beta_1} N^{\beta_1-\beta}$ for $n \leq N$, we have that 
    \begin{align*}
    S_0 & \leq  N^{\beta_1-\beta} \sum_{2 \leq n \leq N} \Big(\sum_{d|n}\chi_{1}(d) \Big) \Big( \sum_{d \mid n} \theta_d \Big)^2  \chi_0(n) n^{-\beta_1} \Big(1-\frac{n}{N}\Big)  \\ 
        & \leq - N^{\beta_1-\beta} \big(1 - N^{-1} \big) + N^{\beta_1-\beta} \sum_{n \leq N} \Big(\sum_{d|n}\chi_{1}(d) \Big) \Big( \sum_{d \mid n} \theta_d \Big)^2 \chi_0(n) n^{-\beta_1} \Big(1-\frac{n}{N}\Big). 
    \end{align*}
 As $L(\beta_1,\chi_1) = 0$ by our assumption,  \cref{prop:ZeroDetector} implies
    \begin{align*}
 &    \Big|   \sum_{n \leq N} \Big(\sum_{d|n}\chi_{1}(d) \Big) \Big( \sum_{d \mid n} \theta_d \Big)^2 \chi_0(n) n^{-\beta_1} \Big(1-\frac{n}{N}\Big) \Big| \leq \Big(2-\beta_1 + \frac{1-\beta_1}{\beta_1-\frac{1}{2}}\Big)  N^{1-\beta_1}. 
    \end{align*}
 Since $\beta_1 -\frac{1}{2} > \frac{1}{2} - \frac{1}{10 \log q} > 0.492$ for $q > 400,000$ and $N^{1-\beta_1}<N^{1-\beta}$, this implies that
	\begin{align*}
	S_0 & \leq -N^{\beta_1-\beta} (1-N^{-1}) +  N^{1-\beta} (1 + 3.1(1-\beta_1) )   \\
	& =  \big( 1 - N^{-1+\beta_1} + N^{-2+\beta_1} + 3.1(1-\beta_1)  \big)N^{1-\beta}\\
	& \leq   \big((1-\beta_1) \log N + N^{-1} + 3.1 (1-\beta_1) \big) N^{1-\beta} \\
& \leq  \tfrac{1}{2\theta} (1-\beta_1)   N^{1-\beta} \log N,
 \end{align*}
	because $1- e^{-x} \leq x$ for $x > 0$, $0 < \theta \leq \frac{1}{4}$, and $N \geq 10^{25} (1-\beta_1)^{-1}$ by \eqref{eqn:N-choice}. Second, we estimate $|S|$. Since $L(\rho,\chi) = 0$ by assumption, \cref{prop:ZeroDetector} implies
    \begin{align*}
    |S| & \leq  \Big( \frac{2-\beta_1}{1-\beta}  + \frac{1}{\beta-\frac{1}{2}} \Big) (1-\beta_1) N^{1-\beta}. 
    \end{align*}
    From McCurley's zero-free region \eqref{eqn:ZFR-McCurley}, we have that $1-\beta \geq (10 \log qT)^{-1} \geq 4\theta (10 \log N)^{-1}$. Also, $2-\beta_1 < 1 + \frac{1}{10 \log q} < 1.008$ for $q > 400,000$ and $\beta > \frac{1}{2} + \frac{5\theta}{\log N}$ by assumption. Combining our observations gives
    \[
    |S| \leq   (\tfrac{10.08}{4\theta} + \tfrac{1}{5\theta}) (1-\beta_1) N^{1-\beta} \log N \leq \tfrac{55}{20\theta} (1-\beta_1)   N^{1-\beta} \log N. 
    \]
    Inserting our estimates for $S_0$ and $|S|$ into \eqref{eqn:Proof-TwoSums}, we find that 
	\begin{align*}
	1 - 10^{-25
 } \leq \tfrac{65}{20\theta} (1-\beta_1) N^{1-\beta} \log N. 
	\end{align*}
	By rearranging and taking logarithms, we have proved \eqref{eqn:proof-N-inequality} and hence \cref{thm:main}. \qed 

\section*{Acknowledgements} This project was initiated in July 2023 as part of the Inclusive Pathways in Explicit Number Theory  summer school, where we have benefited from conversations with many colleagues. We are thankful to BIRS, UBC Okanagan, UBC Vancouver, and PIMS for financial support and for providing conducive research environments. We are also grateful to  Alia Hamieh, Ghaith Hiary, Habiba Kadiri, Allysa Lumley, Greg Martin, and Nathan Ng for organizing the summer school and facilitating this collaboration. We also thank Daniel Johnston, \edit{Rick Lu}, Jesse Thorner, \edit{Kristopher Zhao}, and the anonymous referees for helpful comments on an earlier version of this manuscript. AZ was partially supported by NSERC grant RGPIN-2022-04982. 

\bibliographystyle{alpha}
\bibliography{referencesfinal}

@article {bordignon_explicit_2019,
    AUTHOR = {Bordignon, M.},
     TITLE = {Explicit bounds on exceptional zeroes of {D}irichlet
              {$L$}-functions},
   JOURNAL = {J. Number Theory},
  FJOURNAL = {Journal of Number Theory},
    VOLUME = {201},
      YEAR = {2019},
     PAGES = {68--76},
      ISSN = {0022-314X,1096-1658},
   MRCLASS = {11M06},
  MRNUMBER = {3958041},
MRREVIEWER = {Steven\ Joel\ Miller},
       DOI = {10.1016/j.jnt.2019.02.003},
       URL = {https://doi.org/10.1016/j.jnt.2019.02.003},
}

@article{BORDIGNON2020481,
title = {Explicit bounds on exceptional zeroes of {D}irichlet {$L$}-functions {II}},
journal = {J. Number Theory},
volume = {210},
pages = {481-487},
year = {2020},
issn = {0022-314X},
doi = {https://doi.org/10.1016/j.jnt.2019.10.011},
url = {https://www.sciencedirect.com/science/article/pii/S0022314X1930383X},
author = {Bordignon, M.},
keywords = {Dirichlet -functions, Exceptional zeroes, Explicit upper bounds, Dirichlet characters},
abstract = {This paper improves the upper bound for the exceptional zeroes of Dirichlet L-functions with even characters. The result is obtained by improving on explicit estimate for L′(σ,χ) for σ close to unity, using a result on the average of Dirichlet characters, and on the lower bound for L(1,χ), with computational aid.}
}

@article {kadiri-ng-wong_least_prime_2019,
    AUTHOR = {Kadiri, H. and Ng, N. and Wong, P.-J.},
     TITLE = {The least prime ideal in the {C}hebotarev density theorem},
   JOURNAL = {Proc. Amer. Math. Soc.},
  FJOURNAL = {Proceedings of the American Mathematical Society},
    VOLUME = {147},
      YEAR = {2019},
    NUMBER = {6},
     PAGES = {2289--2303},
      ISSN = {0002-9939,1088-6826},
   MRCLASS = {11R44 (11M41 11R42 11Y35)},
  MRNUMBER = {3951412},
MRREVIEWER = {Prem\ Prakash\ Pandey},
       DOI = {10.1090/proc/14384},
       URL = {https://doi.org/10.1090/proc/14384},
}

@article {friedlander_note_2018,
    AUTHOR = {Friedlander, J. B. and Iwaniec, H.},
     TITLE = {A note on {D}irichlet {$L$}-functions},
   JOURNAL = {Expo. Math.},
  FJOURNAL = {Expositiones Mathematicae},
    VOLUME = {36},
      YEAR = {2018},
    NUMBER = {3-4},
     PAGES = {343--350},
      ISSN = {0723-0869,1878-0792},
   MRCLASS = {11M20 (11N13 11R29)},
  MRNUMBER = {3907336},
MRREVIEWER = {Ayyadurai\ Sankaranarayanan},
       DOI = {10.1016/j.exmath.2018.06.003},
       URL = {https://doi.org/10.1016/j.exmath.2018.06.003},
}

@article {burgess_character_1963,
    AUTHOR = {Burgess, D. A.},
     TITLE = {On character sums and {$L$}-series. {II}},
   JOURNAL = {Proc. London Math. Soc. (3)},
  FJOURNAL = {Proceedings of the London Mathematical Society. Third Series},
    VOLUME = {13},
      YEAR = {1963},
     PAGES = {524--536},
      ISSN = {0024-6115,1460-244X},
   MRCLASS = {10.41},
  MRNUMBER = {148626},
MRREVIEWER = {L.\ Carlitz},
       DOI = {10.1112/plms/s3-13.1.524},
       URL = {https://doi.org/10.1112/plms/s3-13.1.524},
}

@misc{francis_explicit_2022,
      title={Explicit {S}ubconvexity {E}stimates for {D}irichlet {$L$}-functions}, 
      author={Francis, F. J.},
      year={2022},
      eprint={2206.11112},
      archivePrefix={arXiv},
      primaryClass={math.NT},
      url={https://arxiv.org/abs/2206.11112}, 
}

@phdthesis{graham_applications_1977,
author={Graham, S. W.},
year={1977},
title={Applications of Sieve Methods},
journal={ProQuest Dissertations and Theses},
pages={187},
school={The University of Michigan},
keywords={Pure sciences; Mathematics; 0405:Mathematics},
isbn={979-8-204-18477-0},
language={English},
url={http://myaccess.library.utoronto.ca/login?qurl=https%3A%2F%2Fwww.proquest.com%2Fdissertations-theses%2Fapplications-sieve-methods%2Fdocview%2F302861551%2Fse-2%3Faccountid%3D14771},
}

@article {heath-brown_zero-free_1992,
    AUTHOR = {Heath-Brown, D. R.},
     TITLE = {Zero-free regions for {D}irichlet {$L$}-functions, and the
              least prime in an arithmetic progression},
   JOURNAL = {Proc. London Math. Soc. (3)},
  FJOURNAL = {Proceedings of the London Mathematical Society. Third Series},
    VOLUME = {64},
      YEAR = {1992},
    NUMBER = {2},
     PAGES = {265--338},
      ISSN = {0024-6115,1460-244X},
   MRCLASS = {11N13 (11M26)},
  MRNUMBER = {1143227},
MRREVIEWER = {Andrew\ Granville},
       DOI = {10.1112/plms/s3-64.2.265},
       URL = {https://doi.org/10.1112/plms/s3-64.2.265},
}

@article {hiary_explicit_2016,
    AUTHOR = {Hiary, G. A.},
     TITLE = {An explicit hybrid estimate for {$L(1/2+it,\chi)$}},
   JOURNAL = {Acta Arith.},
  FJOURNAL = {Acta Arithmetica},
    VOLUME = {176},
      YEAR = {2016},
    NUMBER = {3},
     PAGES = {211--239},
      ISSN = {0065-1036,1730-6264},
   MRCLASS = {11M06 (11L40)},
  MRNUMBER = {3580112},
MRREVIEWER = {Sary\ Drappeau},
       DOI = {10.4064/aa8433-7-2016},
       URL = {https://doi.org/10.4064/aa8433-7-2016},
}

@article {hiary_improved_2024,
    AUTHOR = {Hiary, G. A. and Patel, D. and Yang, A.},
     TITLE = {An improved explicit estimate for {$\zeta(1/2+it)$}},
   JOURNAL = {J. Number Theory},
  FJOURNAL = {Journal of Number Theory},
    VOLUME = {256},
      YEAR = {2024},
     PAGES = {195--217},
      ISSN = {0022-314X,1096-1658},
   MRCLASS = {11L07 (11M06)},
  MRNUMBER = {4665522},
       DOI = {10.1016/j.jnt.2023.09.003},
       URL = {https://doi.org/10.1016/j.jnt.2023.09.003},
}

@article {jutila_linniks_1977,
    AUTHOR = {Jutila, M.},
     TITLE = {On {L}innik's constant},
   JOURNAL = {Math. Scand.},
  FJOURNAL = {Mathematica Scandinavica},
    VOLUME = {41},
      YEAR = {1977},
    NUMBER = {1},
     PAGES = {45--62},
      ISSN = {0025-5521,1903-1807},
   MRCLASS = {10H20 (10H10)},
  MRNUMBER = {476671},
MRREVIEWER = {W.\ Schwarz},
       DOI = {10.7146/math.scand.a-11701},
       URL = {https://doi.org/10.7146/math.scand.a-11701},
}

@article {kadiri_explicit_2018,
    AUTHOR = {Kadiri, H.},
     TITLE = {Explicit zero-free regions for {D}irichlet {$L$}-functions},
   JOURNAL = {Mathematika},
  FJOURNAL = {Mathematika. A Journal of Pure and Applied Mathematics},
    VOLUME = {64},
      YEAR = {2018},
    NUMBER = {2},
     PAGES = {445--474},
      ISSN = {0025-5793,2041-7942},
   MRCLASS = {11M26 (11M06)},
  MRNUMBER = {3798607},
MRREVIEWER = {Keiju\ Sono},
       DOI = {10.1112/S0025579318000037},
       URL = {https://doi.org/10.1112/S0025579318000037},
}

@article {linnik_least_1944,
    AUTHOR = {Linnik, U. V.},
     TITLE = {On the least prime in an arithmetic progression. {I}. {T}he
              basic theorem},
   JOURNAL = {Rec. Math. [Mat. Sbornik] N.S.},
  FJOURNAL = {Rec. Math. [Mat. Sbornik] N.S.},
    VOLUME = {15/57},
      YEAR = {1944},
     PAGES = {139--178},
   MRCLASS = {10.0X},
  MRNUMBER = {12111},
MRREVIEWER = {H.\ Davenport},
}

@article {linnik_least_1944-1,
    AUTHOR = {Linnik, U. V.},
     TITLE = {On the least prime in an arithmetic progression. {II}. {T}he
              {D}euring-{H}eilbronn phenomenon},
   JOURNAL = {Rec. Math. [Mat. Sbornik] N.S.},
  FJOURNAL = {Rec. Math. [Mat. Sbornik] N.S.},
    VOLUME = {15/57},
      YEAR = {1944},
     PAGES = {347--368},
   MRCLASS = {10.0X},
  MRNUMBER = {12112},
MRREVIEWER = {H.\ Davenport},
}

@article {mccurley_explicit_1984,
    AUTHOR = {McCurley, K. S.},
     TITLE = {Explicit zero-free regions for {D}irichlet {$L$}-functions},
   JOURNAL = {J. Number Theory},
  FJOURNAL = {Journal of Number Theory},
    VOLUME = {19},
      YEAR = {1984},
    NUMBER = {1},
     PAGES = {7--32},
      ISSN = {0022-314X,1096-1658},
   MRCLASS = {11M20},
  MRNUMBER = {751161},
MRREVIEWER = {M.\ Maknys},
       DOI = {10.1016/0022-314X(84)90089-1},
       URL = {https://doi.org/10.1016/0022-314X(84)90089-1},
}

@article {platt_numerical_2016,
    AUTHOR = {Platt, D. J.},
     TITLE = {Numerical computations concerning the {GRH}},
   JOURNAL = {Math. Comp.},
  FJOURNAL = {Mathematics of Computation},
    VOLUME = {85},
      YEAR = {2016},
    NUMBER = {302},
     PAGES = {3009--3027},
      ISSN = {0025-5718,1088-6842},
   MRCLASS = {11M26 (11M06 11P32)},
  MRNUMBER = {3522979},
MRREVIEWER = {Temenoujka\ P.\ Peneva},
       DOI = {10.1090/mcom/3077},
       URL = {https://doi.org/10.1090/mcom/3077},
}

@article {petrow_weyl_2020,
    AUTHOR = {Petrow, I. and Young, M. P.},
     TITLE = {The {W}eyl bound for {D}irichlet {$L$}-functions of cube-free
              conductor},
   JOURNAL = {Ann. of Math. (2)},
  FJOURNAL = {Annals of Mathematics. Second Series},
    VOLUME = {192},
      YEAR = {2020},
    NUMBER = {2},
     PAGES = {437--486},
      ISSN = {0003-486X,1939-8980},
   MRCLASS = {11M06 (11F66)},
  MRNUMBER = {4151081},
MRREVIEWER = {Wen-Wei\ Li},
       DOI = {10.4007/annals.2020.192.2.3},
       URL = {https://doi.org/10.4007/annals.2020.192.2.3},
}

@article {petrow_fourth_2023,
    AUTHOR = {Petrow, I. and Young, M. P.},
     TITLE = {The fourth moment of {D}irichlet {$L$}-functions along a coset
              and the {W}eyl bound},
   JOURNAL = {Duke Math. J.},
  FJOURNAL = {Duke Mathematical Journal},
    VOLUME = {172},
      YEAR = {2023},
    NUMBER = {10},
     PAGES = {1879--1960},
      ISSN = {0012-7094,1547-7398},
   MRCLASS = {11M06 (11F11 11F12 11F66 11L40)},
  MRNUMBER = {4624371},
MRREVIEWER = {Dimitrios\ Chatzakos},
       DOI = {10.1215/00127094-2022-0069},
       URL = {https://doi.org/10.1215/00127094-2022-0069},
}

@article{siegel_uber_1935,
	title = {Über die {Classenzahl} quadratischer {Zahlkörper}},
	volume = {1},
	issn = {0065-1036, 1730-6264},
	url = {http://www.impan.pl/get/doi/10.4064/aa-1-1-83-86},
	doi = {10.4064/aa-1-1-83-86},
	language = {en},
	number = {1},
	urldate = {2024-08-13},
	journal = {Acta Arithmetica},
	author = {Siegel, C.},
	year = {1935},
	pages = {83--86},
}

@article {thorner_explicit_2024,
    AUTHOR = {Thorner, J. and Zaman, A.},
     TITLE = {An explicit version of {B}ombieri's log-free density estimate
              and {S}\'ark\"ozy's theorem for shifted primes},
   JOURNAL = {Forum Math.},
  FJOURNAL = {Forum Mathematicum},
    VOLUME = {36},
      YEAR = {2024},
    NUMBER = {4},
     PAGES = {1059--1080},
      ISSN = {0933-7741,1435-5337},
   MRCLASS = {11M06 (11M20 11M26 11P32)},
  MRNUMBER = {4767374},
       DOI = {10.1515/forum-2023-0091},
       URL = {https://doi.org/10.1515/forum-2023-0091},
}

\end{document}